\newtheorem{theorem}{Theorem}
\newtheorem{lemma}[theorem]{Lemma}
\newtheorem{proposition}[theorem]{Proposition}
\newtheorem{corollary}[theorem]{Corollary}
\newtheorem{conjecture}{Conjecture}
\newenvironment{proof}{\paragraph{\it Proof.}}{$\square$\vskip0.4cm}
\newcommand{\beq}{\begin{eqnarray}}
\newcommand{\eeq}{\end{eqnarray}}
\newcommand{\bes}{\begin{eqnarray*}}
\newcommand{\ees}{\end{eqnarray*}}
\newcommand{\gl}{{\mathfrak g}\mathfrak{l}}
\newcommand{\GL}{{\rm GL}}
\newcommand{\g}{{\mathfrak g}}
\newcommand{\F}{{\mathbb F}}
\newcommand{\vv}{{\mathbf v}}
\newcommand{\1}{{\mathbf 1}}
\renewcommand{\char}{{\rm char}}
\renewcommand{\P}{{\mathcal P}}
\newcommand{\w}{{\mathbf w}}
\newcommand{\calM}{{\cal M}}
\newcommand{\calP}{{\cal P}}
\newcommand{\N}{{\mathbb N}}
\newcommand{\calC}{{\cal C}}
\newcommand{\calV}{{I}}
\newcommand{\calE}{{E}}
\newcommand{\C}{{\mathbb C}}
\newcommand{\Z}{{\mathbb Z}}
\newcommand{\X}{{\mathcal X}}
\newcommand{\Y}{{\mathcal Y}}
\newcommand{\K}{\mathbb K}
\newcommand{\V}{{\mathbb V}}
\newcommand{\M}{{\mathbb M}}
\newcommand{\rmG}{{\rm G}}
\newcommand{\0}{{\mathbf 0}}
\newcommand{\G}{{\rm G}}
\newcommand{\im}{\mathop{\rm im}\nolimits}
\newcommand{\tr}{\mathop{\rm tr}\nolimits}
\newcommand{\Hom}{\mathop{\rm Hom}\nolimits}
\begin{document}
\title{Kac's conjecture from  Nakajima quiver varieties}
\author{
 Tam\'as Hausel
\\ {\it University of Oxford}
\\{\tt hausel@maths.ox.ac.uk}}
\maketitle

\begin{abstract} We prove a generating function formula for the Betti numbers of  Nakajima quiver varieties. We prove that  it is a  $q$-deformation of the Weyl-Kac character formula. In particular this implies  that the constant term of the polynomial counting the number of absolutely indecomposable representations of a quiver equals the multiplicity of a a certain weight in the corresponding Kac-Moody algebra, which was conjectured by Kac in 1982. 
\end{abstract}

Let $\Gamma=(\calV,\calE)$ be a quiver that is an oriented 
graph on a finite set $\calV=\{1,\dots,n\}$ with $\calE\subset \calV\times \calV$ a finite multiset  of oriented
(perhaps multiple but no loop) edges. Given two dimension vectors $\vv=(v_i)\in \N^\calV$ and $\w=(w_i)\in \N^\calV$  Nakajima \cite{nakajima0,nakajima} constructs, as holomorphic symplectic quotient, a complex variety $\calM(\vv,\w)$ of dimension  $2d_{\vv,\w}$, which we call a {\em Nakajima quiver variety}. In \cite{nakajima-tq} Nakajima found a combinatorial algorithm to determine the Betti numbers of these varieties. 

Here we prove and study the following generating function of  Betti numbers of  Nakajima quiver varieties.

\begin{theorem} Fix $\w\in \N^\calV$. Denote $b_i(\calM(\vv,\w)):=\dim\left(H^i(\calM(\vv,\w))\right)$. Then in the notation of \eqref{dimension},\eqref{formal}, \eqref{partitionotation}
\beq \label{hausel} \sum_{\vv\in \N^\calV} \sum_{i=0}^{{d_{\vv,\w}}} b_{2i}(\calM(\vv,\w)) q^{d_{\vv,\w}-i} X^{\vv}= \frac{\displaystyle{\sum_{\lambda  \in \calP^\calV} }  \frac{\left(\prod_{(i,j)\in \calE} q^{\langle \lambda^i,\lambda^j\rangle} \right)\left(\prod_{i\in \calV} q^{\langle\lambda^i,1^{\w_i}\rangle}\right)}{\prod_{i\in \calV} \left(q^{\langle\lambda^i,\lambda^i\rangle}\prod_k \prod_{j=1}^{m_k(\lambda^i)} (1-q^{-j})\right) }X^{|\lambda|}}{\displaystyle{\sum_{\lambda  \in \calP^\calV}}   \frac{\prod_{(i,j)\in \calE} q^{\langle \lambda^i,\lambda^j\rangle}}{\prod_{i\in \calV} \left(q^{\langle\lambda^i,\lambda^i\rangle}\prod_k \prod_{j=1}^{m_k(\lambda^i)} (1-q^{-j})\right) }X^{|\lambda|}}
\eeq  \label{maint}
\end{theorem}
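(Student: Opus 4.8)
\emph{Plan of proof.} The plan is arithmetic: for every prime power $q$ I would count $\calM(\vv,\w)(\F_q)$ and recover the Betti numbers via purity. Each $\calM(\vv,\w)$ is a smooth quasi-projective variety carrying the $\C^*$-action rescaling the holomorphic symplectic form; for this action $\lim_{t\to0}t\cdot x$ exists for every $x$ and the fixed locus is projective, so $\calM(\vv,\w)$ is \emph{semiprojective}. It is then known (and deducible from semiprojectivity) that its cohomology is pure of Hodge--Tate type and concentrated in even degrees, whence the Grothendieck--Lefschetz trace formula gives $\#\calM(\vv,\w)(\F_q)=q^{d_{\vv,\w}}\sum_{i=0}^{d_{\vv,\w}} b_{2i}(\calM(\vv,\w))\,q^{i}$ for all $q$ (as one already sees for the Hilbert scheme of points of the plane). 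After the routine bookkeeping of powers of $q$ --- a $q^{d_{\vv,\w}}$ shift and a substitution $q\mapsto q^{-1}$, which is exactly what the normalisations on the right of \eqref{hausel} encode --- the left hand side of \eqref{hausel} is identified with $\sum_{\vv}\#\calM(\vv,\w)(\F_q)\,X^\vv$, so it suffices to evaluate these point counts.

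Next I would use the hyperk\"ahler-quotient presentation $\calM(\vv,\w)=\mu_\vv^{-1}(0)^{s}/\rmG_\vv$, where $\rmG_\vv=\prod_{i\in\calV}\GL(V_i)$ acts with moment map $\mu_\vv$ on the doubled, framed representation space $\bM(\vv,\w)=\bigoplus_{(i,j)\in\calE}\big(\Hom(V_i,V_j)\oplus\Hom(V_j,V_i)\big)\oplus\bigoplus_{i\in\calV}\big(\Hom(W_i,V_i)\oplus\Hom(V_i,W_i)\big)$, and $s$ selects the representations whose underlying $\bar\Gamma$-representation is generated by the images of the framing maps; on the stable locus $\rmG_\vv$ acts freely, so $\#\calM(\vv,\w)(\F_q)=\#\mu_\vv^{-1}(0)^{s}(\F_q)/\#\rmG_\vv(\F_q)$. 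Stratifying an arbitrary point of $\mu_\vv^{-1}(0)$ by the subrepresentation generated by its framing and counting the resulting gluing/extension data --- a Gaussian-binomial count of subspaces times a power of $q$ that absorbs the surplus group-order factors --- yields a convolution identity: with the Euler-form normalisation, the generating function of the framed stacky counts $\#\mu_\vv^{-1}(0)(\F_q)/\#\rmG_\vv(\F_q)$ factors as $\big(\sum_{\vv}\#\calM(\vv,\w)(\F_q)X^\vv\big)$ times the generating function of the \emph{unframed} stacky counts $\#(\mu^{0}_\vv)^{-1}(0)(\F_q)/\#\rmG_\vv(\F_q)$. Hence the sought generating function is the ratio of the framed stacky count by the unframed one, which already has the shape of the right hand side of \eqref{hausel}.

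It then remains to evaluate each stacky count. Fixing a nontrivial additive character $\psi$ of $\F_q$, I would write $\#\mu^{-1}(0)(\F_q)=q^{-\dim\g_\vv}\sum_{\xi\in\g_\vv}\sum_{x\in\bM(\vv,\w)(\F_q)}\psi\big(\langle\xi,\mu(x)\rangle\big)$; performing the linear Gauss sum over each $\Hom$-block for fixed $\xi$ and then summing over $\xi$ organised into adjoint orbits of $\rmG_\vv(\F_q)$, one checks that after dividing by $\#\rmG_\vv(\F_q)$ everything but the Jordan-type data cancels, so the sum collapses to a sum over tuples $\lambda=(\lambda^i)_{i\in\calV}\in\calP^\calV$. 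The surviving weight is $\prod_{(i,j)\in\calE}q^{\langle\lambda^i,\lambda^j\rangle}$, one factor per undoubled edge --- the two orientations, the moment-map constraint and $\#\rmG_\vv$ together halving the naive contribution --- times $\prod_{i\in\calV}q^{\langle\lambda^i,1^{\w_i}\rangle}$ from the framing blocks $\Hom(W_i,V_i)\oplus\Hom(V_i,W_i)$ (present only in the framed count), times $\big(q^{\langle\lambda^i,\lambda^i\rangle}\prod_k\prod_{j=1}^{m_k(\lambda^i)}(1-q^{-j})\big)^{-1}$ per vertex, the last factor being (up to a power of $q$) the reciprocal of the order of the centraliser $Z_{\GL(V_i)}(\xi)$ of an element of type $\lambda^i$. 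This is precisely Hua's formula for the (framed) doubled quiver, and matching both stacky counts term-by-term with the numerator and denominator of \eqref{hausel} completes the proof.

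The step I expect to be the main obstacle is this last evaluation: showing that the sum over \emph{all} conjugacy classes of $\rmG_\vv(\F_q)$ genuinely collapses to a sum indexed only by tuples of partitions, with the exact exponents $\langle\lambda^i,\lambda^j\rangle$, $\langle\lambda^i,\lambda^i\rangle$ and $\langle\lambda^i,1^{\w_i}\rangle$, requires Hua's formula (or a re-derivation), the identification of $\langle\cdot,\cdot\rangle$ with dimensions of $\Hom$-spaces of torsion $\F_q[t]$-modules and with centraliser exponents, and care that the reductions do not depend on which irreducible polynomial one specialises to. The convolution of the second paragraph is the other delicate point, since it rests on the extension contributions cancelling the group-order factors exactly; the purity input of the first paragraph, while essential, is comparatively standard.
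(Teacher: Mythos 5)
Your overall architecture (purity of a semiprojective variety plus point counting over $\F_q$ plus a character-sum evaluation) matches the paper's, and your second paragraph --- working on the zero fibre $\mu^{-1}(0)^{s}/\rmG_\vv$ and extracting $\#\calM(\vv,\w)(\F_q)$ from a convolution identity between framed and unframed stacky counts --- is a legitimate alternative route (it is essentially the Crawley-Boevey--Van den Bergh/Mozgovoy approach mentioned in the paper's footnote). The genuine gap is in your final evaluation step. After writing $\#\mu^{-1}(0)(\F_q)=q^{-\dim\g_\vv}\sum_{\xi}\sum_{x}\psi(\langle\xi,\mu(x)\rangle)$ and doing the Gauss sums, what remains is $\frac{|\V_{\vv,\w}|}{|\g_\vv|}\sum_{[\xi]}a_{\varrho}(\xi)/|\calC_\xi|$ summed over \emph{all} adjoint orbits of $\g_\vv(\F_q)$, which are indexed by tuples of maps from monic irreducible polynomials to partitions, not by tuples of partitions. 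Since every orbit contributes a positive term and there is no character twist at level zero (you are pairing against $\xi=0$, so $\psi$ evaluates to $1$ on the central direction), nothing cancels: the sum does \emph{not} collapse to nilpotent types. A one-vertex, no-edge check already shows the discrepancy: the unframed zero-level stacky count is $\sum_n|\GL_n(\F_q)|^{-1}X^n$, whereas the denominator of \eqref{hausel} is $\sum_n q^{n^2-n}|\GL_n(\F_q)|^{-1}X^n$ (the nilpotent orbit sum). So neither of your two stacky counts equals the corresponding side of \eqref{hausel}, and the "term-by-term matching" at the end does not go through. To salvage your route you would need Hua's full plethystic-exponential treatment of all conjugacy-class types (which is exactly what makes the Mozgovoy proof nontrivial), not a direct partition-indexed collapse.

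The paper sidesteps this entirely by counting the fibre over the \emph{nonzero} central element $\1_\vv^*$ rather than over $0$. This has two payoffs you are missing. First, by Corollary~\ref{simple} the group acts freely on the whole fibre $\mu_{\vv,\w}^{-1}(\1_\vv^*)$, so no stability condition and no convolution identity are needed: the point count of $\calM_{0,1}(\vv,\w)$ is just $|{\mathcal V}_1(\vv,\w)(\F_q)|/|\G_\vv(\F_q)|$ (purity is then transported from the level-zero model by the deformation argument of Theorem~\ref{pure}). Second, the Fourier transform now carries the additive character $\Psi(\tr_\vv(x))$; decomposing each $x_i$ into its nilpotent and invertible parts factors the generating function as $\Phi(\w)=\Phi_{nil}(\w)\,\Phi_{reg}$, and the invertible part is never computed directly: Lemma~\ref{lemma} forces $\Phi(\0)=1$, whence $\Phi_{reg}=\Phi_{nil}(\0)^{-1}$ and $\Phi(\w)=\Phi_{nil}(\w)/\Phi_{nil}(\0)$. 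Only the nilpotent sums $\Phi_{nil}$ are evaluated via Hua's centraliser and $\Hom$-counting lemmas, and these are exactly the numerator and denominator of \eqref{hausel}. This indirect determination of the non-nilpotent contribution is the key idea absent from your proposal.
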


The method of the proof is, as announced in \cite{hausel-betti}, {\em arithmetic Fourier transform} for holomorphically symplectic quotients. It proceeds by counting points of $\calM(\vv,\w)$ over finite fields $\F_q$ using a Fourier transform technique, then showing that the mixed Hodge structure on $H^*(\calM(\vv,\w))$ is pure and concludes by applying Katz's result \cite[Appendix, Theorem 6.2]{hausel-villegas}
connecting the arithmetic and the cohomology  of so-called { polynomial-count}  varieties.

The other theme of this paper is a combinatorial study of the formula \eqref{hausel}. Using the main result of \cite{nakajima} we prove that
the Weyl-Kac character formula is the $q=0$ specialization of \eqref{hausel}. We then interpret Hua's formula 
\cite[Theorem 4.9]{hua} as a $q$-deformation of the Kac denominator formula for the denominator on the RHS of \eqref{hausel}. This in particular implies for $\alpha\in \N^\calV$ that the constant term of Kac's $A$-polynomial counting absolutely indecomposable representations of a quiver of dimension $\alpha$ over a finite field agrees with the multiplicity of $\alpha$ in the Kac-Moody algebra $\g(\Gamma)$ associated to the quiver. This completes\footnote{After this result and Theorem~\ref{maint} were announced in \cite{hausel-betti} Mozgovoy in  \cite{mozgovoy2} found an alternative proof of
Theorem~\ref{maint} by combining \cite{crawley-boevey-etal} and \cite{hua}. Our proof is independent of \cite{crawley-boevey-etal} and gives a more general approach
to calculate the Betti numbers of holomorphic symplectic quotients of vector spaces by linear actions of reductive groups.} a proof of Kac's \cite[Conjecture 1]{kac2}. This conjecture was known for finite and tame quivers \cite{kac2}, and for all quivers  with $\alpha$ indivisible \cite{crawley-boevey-etal} by study of similar quiver varieties, but was not fully known for any single wild quiver.  

\begin{paragraph} {\bf Acknowledgment.}  I am grateful for Hiraku Nakajima for suggesting the  possibility of the application of \eqref{hausel}  for Kac's conjecture,  for Fernando Rodriguez-Villegas for drawing my attention to Hua's work, 
for Nick Proudfoot for introducing me to Kac's conjecture and an anonymous referee for useful comments. My research into the Betti numbers of Nakajima quiver varieties started during the "Geometry of Quiver varieties"  seminar \cite{quiver-seminar} at UC Berkeley in Fall 2000, where I learned a great deal from the speakers of the seminar. This work has been supported by a Royal Society University Research Fellowship, NSF grants DMS-0305505 and  DMS-0604775 and an Alfred Sloan Fellowship 2005-2007.

\end{paragraph} 

\section{Arithmetic Fourier transform}

\paragraph{Fourier transform over finite vector spaces.} Let $\K$ be the finite field $\F_q$ on $q=p^r$ elements, where $p$ is a prime. 
 Let $V$ be a finite dimensional vector space over $\K$. Consider the   finite abelian group underlying $V$. The set of characters $$\widehat{V}:=\{ \chi:V\to \C^\times\ | \ \  \chi(v_1-v_2)=\chi(v_1)\chi(v_2)^{-1} \mbox{ for all } v,w\in V\} $$ forms a $\K$-vector space by the definitions 
$\chi-\psi=\chi\psi^{-1}$,  and $\alpha\chi(v)=\chi(\alpha v)$, where $\alpha\in \K$ and $v\in V$.  We have $|V|=|\widehat{V}|$, where $|S|$ denotes the cardinality of any finite set $S$. 
  The  orthogonality relations on characters of finite abelian groups say that 
 $$\sum_{v\in V} \chi_1(v)\chi_2(v)=\left\{ \begin{array}{cc} |V| & \chi_1=-\chi_2\\ 0 & \mbox {ow} \end{array} \right.$$
 and dually  $$\sum_{\chi\in \widehat{V}} \chi(v_1)\chi(v_2)=\left\{ \begin{array}{cc} |\widehat{V}| & v_1=-v_2\\ 0 & \mbox {ow} \end{array} \right.$$

The Fourier transform of a function  $f:V\to\C $  is  defined to be the function  $\widehat{f}:\widehat{V}\to \C$
 $$\widehat{f}(\chi)=|V|^{-1/2} \sum_{v\in V} f(v)\chi(v).$$ With this definition we find that for any $\chi_1,\chi_2\in \widehat{V}$ 
 $$\widehat{\chi}_1(\chi_2)=|V|^{-1/2} \sum_{v\in V} \chi_1(v)\chi_2(v)=|V|^{1/2} \delta_{-\chi_1}(\chi_2),$$ where for any element of any finite abelian
 group $w\in W$, we denote the delta function $\delta_w:W\to \C$, given by $\delta_w(v)=0$ unless $v=w$ when $\delta_w(w)=1$. Thus for any $\chi\in \widehat{V}$ we have \beq \label{fourierchar}\widehat{\chi}=|V|^{1/2} \delta_{-\chi}.\eeq

We can think of an element $v\in V$ as a character $v:\widehat{V}\to \C^\times$ given by $v(\chi):=\chi(v)$, this way we can identify 
 $\widehat{\widehat{V}}=V$. Thus the Fourier transform of a function $g:\widehat{V}\to \C$ becomes $\widehat{g}:V\to \C$
 with the formula $$\widehat{g}(v)=|V|^{-1/2} \sum_{\chi\in \widehat{V}}g(\chi) \chi(v).$$ For any $f:V\to \C$ and $x\in V$ we have the {\em Fourier Inversion Formula}
 \beq \label{fif} \widehat{\!\!\widehat{f}}(x)=|V|^{-1} \sum_{v\in V} \sum_{\chi \in \widehat{V}} f(v) \chi(x)\chi(v)=  |V|^{-1}  \sum_{v\in V} f(v) |V| \delta_{v+x}(0) = f(-x).\eeq

 In order to identify $\widehat{V}$ with $V^*$ we
fix $\Psi:\K\to \C^\times$ a non-trivial additive character in the remainder of the paper. 
 For  $w\in V^*$  we consider the  character
$\chi_w:V\to \C^\times$  which at $v\in V$ is given by $\chi_w(v):=\Psi(w(v))$. 
  Consider the map $\pi_w: V\to \K$ given by $\pi_w(v)=w(v)$. When $w\neq 0$ the linear
 map $\pi_w$ is surjective and so   
 $$\sum_{v\in V} \chi_w(v)=\sum_{x\in \K} \sum_{\pi_w(v)=x} \Psi(x) = \sum_{x\in \K} |\ker\pi_w| \Psi(x)=0,$$ as $\Psi$ is a non-trivial character of $\K$; therefore $\chi_w\neq \chi_0$ and so
 $\chi_{w_1}=\chi_{w_2}$ if and only if $w_1=w_2$.  Thus $w\mapsto \chi_w$ indentifies \beq \label{ident} V^*=\{\chi_w\}_{w\in V^*}=\widehat{V}\eeq because  $|V^*|=|\widehat{V}|=|V|$.

\paragraph{Counting solutions of moment map equations over finite fields.}  Let 
$\rmG$ be an algebraic group over $\K$, $\g$ its Lie algebra. Consider a representation $\rho: \rmG\to \GL(\V)$ of $\rmG$ 
on a $\K$-vector space $\V$, inducing the Lie algebra representation 
$\varrho: \g \to \gl(\V)$. 
This induces an action $\overline{\rho}: \rmG\to \GL(\M)$ and $\overline{\varrho}:\g\to \gl(\M)$ on $\M=\V\times \V^*$. 
 The vector
space $\M$ has a natural symplectic structure; defined by the natural pairing $\langle v,w\rangle =w(v)$, 
with $v\in \V$
and $w\in \V^*$ by the formula: $$\omega((v_1,w_1),(v_2,w_2))=w_2(v_1)-w_2(v_1).$$ 
With respect to this symplectic form a moment map 
$$\mu: \V\times \V^* \to \g^*$$ 
of $\rho$ is given at $x\in \g$ by  \beq \label{momentmap} 
\langle \mu(v,w),x\rangle=\langle \varrho(x)v,w\rangle.
\eeq 
This $\mu$ is indeed a moment map because if for an $x\in \g$ we introduce $f_x:\M\to \K$ by \beq \label{fx}f^x(v_1,w_1)=\langle \mu (v_1,w_1),x\rangle\eeq and take the constant vector field  $Y$ on $\M$ given by $(v_2,w_2)\in \M$ then \beq \label{momentdefn} \nonumber df^x_{(v_1,w_1)}(Y)&=&Yf^x(v_1,w_1)=Y\langle\varrho(x)v_1,w_1\rangle=\\ &=& \langle\varrho(x)v_2,w_1\rangle+\langle\varrho(x)v_1,w_2\rangle=-\langle v_2,\varrho^*(x)w_1\rangle+\langle\varrho(x)v_1,w_2\rangle=\omega(\overline{\varrho}(x)(v_1,w_1),(v_2,w_2)),\eeq which is the defining property of a moment map. 

Our main proposition determines  $$\#_\mu(\xi):=\sum_{(v,w)\in M} \delta_{\mu(v,w)}(\xi)$$ the number of solutions $(v,w)\in \V\times \V^*$  of the moment map equation $\mu(v,w)=\xi$ for a fixed $\xi\in \g^*$. 
The main observation is that the Fourier transform of $\#_\mu:\g^*\to \C$ considered as an  $\N\subset \C$-valued function on $\g^*$ is
a simple function on $\g$.  Indeed  for $x\in \g$ we find
\begin{eqnarray} \nonumber \widehat{\#}_\mu(x)&=&\sum_{(v,w)\in \M}\widehat{\delta}_{\mu(v,w)}(x)=|\g|^{1/2} \sum_{(v,w)\in \M} \chi_{\mu(v,w)}(-x)=|\g|^{1/2} \sum_{(v,w)\in \M} \Psi(\langle \mu(v,w),-x\rangle)\\ &= & \nonumber |\g|^{1/2} \sum_{(v,w)\in \M} \Psi(\langle \varrho(-x)v,w\rangle)=|\g|^{1/2}\sum_{v\in \V} \sum_{w\in \V^*} \chi_w(\rho(-x)v)=|\g|^{1/2}|\V| \sum_{v\in \V} \delta_0(\rho(-x)v)\\ &=&   |\g|^{1/2}|\V| a_\rho(-x) \label{fouriersolution} \end{eqnarray}
where we used the notation 
$a_\varrho:\g\to {\mathbb N}\subset \C$ given at $x\in \g$ by 
\beq \label{kernel} a_\varrho(x):= \sum_{v\in \V} \delta_0(\rho(x)v) =|\ker(\varrho(x))|. \eeq In particular $a_\varrho(X)$ is always a power of $q$.  Taking Fourier transforms of both sides of \eqref{fouriersolution}
and using \eqref{fif} we get our main proposition:
\begin{proposition} The number of solutions of the equation  $\mu(v,w)=\xi$
 over the finite field $\F_q$ equals:  \bes \# \{(v,w)\in \M\,\, |\,\, \mu(v,w)= \xi\}=\#_\mu(\xi) = |\g|^{-1/2} |\V| \widehat{a}_{\varrho
}(\xi)= |\g|^{-1} |\V| \sum_{x\in \g} a_{\varrho
}(x) \Psi(\langle x, \xi\rangle)\ees
\label{main}
\end{proposition}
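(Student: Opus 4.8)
The plan is to read the statement off directly from the identity \eqref{fouriersolution} established just above, which already computes the Fourier transform of the counting function $\#_\mu\colon\g^*\to\C$. Once $\widehat{\#}_\mu$ is known, $\#_\mu$ itself is recovered by Fourier inversion, and the whole point of \eqref{fouriersolution} is that it exhibits $\widehat{\#}_\mu$, under the identification $\widehat{\g^*}=(\g^*)^*=\g$ of \eqref{ident}, as the manifestly elementary function $x\mapsto|\V|\,a_\varrho(-x)$ up to the normalising power of $|\g|$ displayed there, where $a_\varrho(x)=|\ker\varrho(x)|$ is a power of $q$ by \eqref{kernel}. So the proof will be a one-line application of the Fourier Inversion Formula \eqref{fif}, followed by unwinding the definition of the Fourier transform on $\g$; the only thing that needs attention is the bookkeeping of the successive identifications $\widehat{V}=V^*$ and $\widehat{\widehat V}=V$ from \eqref{ident} and \eqref{fif}.

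Carrying this out, I would apply the Fourier transform a second time to both sides of \eqref{fouriersolution}. On the left, \eqref{fif} turns $\widehat{\widehat{\#}}_\mu$ into the function $\xi\mapsto\#_\mu(-\xi)$ on $\g^*$. On the right, by linearity this is $|\V|$ times the Fourier transform of $x\mapsto a_\varrho(-x)$, and the change of summation variable $x\mapsto-x$ identifies the latter with $\xi\mapsto\widehat{a}_\varrho(-\xi)$, where $\widehat{a}_\varrho\colon\g^*\to\C$ is the Fourier transform of $a_\varrho\colon\g\to\C$ under $\widehat{\g}=\g^*$. Thus the two transforms produce the identity $\#_\mu(-\xi)=|\g|^{-1/2}|\V|\,\widehat{a}_\varrho(-\xi)$ of functions on $\g^*$, and replacing $\xi$ by $-\xi$ gives the first asserted equality $\#_\mu(\xi)=|\g|^{-1/2}|\V|\,\widehat{a}_\varrho(\xi)$. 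The second equality is then only the definition of the Fourier transform on $\g$ written out: with $\widehat{\g}=\g^*$ from \eqref{ident} and the character $\chi_\xi$ given by $\chi_\xi(x)=\Psi(\langle x,\xi\rangle)$, one has $\widehat{a}_\varrho(\xi)=|\g|^{-1/2}\sum_{x\in\g}a_\varrho(x)\,\Psi(\langle x,\xi\rangle)$, and substituting this in turns the factor $|\g|^{-1/2}|\V|$ into $|\g|^{-1}|\V|$, giving $\#_\mu(\xi)=|\g|^{-1}|\V|\sum_{x\in\g}a_\varrho(x)\,\Psi(\langle x,\xi\rangle)$.

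I do not expect a genuine obstacle here, because all the real content already lies in \eqref{fouriersolution}, which itself uses nothing beyond the character orthogonality relations and the fact, recorded in \eqref{ident}, that $w\mapsto\chi_w$ exhausts $\widehat{\V}$. If anything is delicate it is keeping the minus signs straight — the $-x$ inside $a_\varrho$ in \eqref{fouriersolution}, the $-x$ coming from $\widehat{\widehat V}=V$, and the $-\xi$ produced by \eqref{fif} — and as a safeguard I would record two symmetries that render all of them harmless: $a_\varrho(-x)=|\ker(-\varrho(x))|=a_\varrho(x)$ because $-1$ acts invertibly on $\V$, and $\#_\mu(-\xi)=\#_\mu(\xi)$ because the involution $(v,w)\mapsto(v,-w)$ of $\M$ is a bijection carrying $\mu^{-1}(\xi)$ onto $\mu^{-1}(-\xi)$ by the defining formula \eqref{momentmap}. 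With either of these in hand the computation closes without sign ambiguity.
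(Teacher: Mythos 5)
Your argument is exactly the paper's: Proposition~\ref{main} is obtained there by taking the Fourier transform of both sides of \eqref{fouriersolution} and applying the inversion formula \eqref{fif}, and your explicit sign bookkeeping (the evenness of $a_\varrho$ and of $\#_\mu$) correctly fills in what the paper leaves implicit. The only caveat is that the prefactor in \eqref{fouriersolution} should read $|\g|^{-1/2}$ rather than $|\g|^{1/2}$ for the stated normalization $|\g|^{-1}|\V|$ to emerge after the second transform; that is a typo in the displayed equation (as one checks, e.g., on the case $\V=0$), not a gap in your proof.
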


\section{Nakajima quiver varieties} 
These varieties were introduced and studied in \cite{nakajima0,nakajima}, here we go through the definitions and for the sake of self-containedness we reprove some results we need later. 

Let $\Gamma=(\calV,\calE)$ be a quiver, i.e. an oriented 
graph on a finite set $\calV=\{1,\dots,n\}$ with $\calE\subset \calV\times \calV$ a finite multiset of oriented
(perhaps multiple and loop) edges. We will think of $\calE$ as an abstract finite set together with source and target maps
$s,t:E\to I$ so that the oriented edge $e=(s(e),t(e))\in \calV\times \calV$.   To each vertex $i$ of the graph we associate two finite dimensional $\K$ vector
spaces $V_i$ and $W_i$. We call $(\vv_1,\dots,\vv_n,\w_1,\dots, \w_n)=(\vv,\w)$ the dimension vector, where $\vv_i=\dim(V_i)$ and $\w_i=\dim(W_i)$. To this data we associate the grand vector space:
$$\V_{\vv,\w}=\bigoplus_{e\in \calE} \Hom(V_{s(e)},V_{t(e)}) \oplus \bigoplus_{i\in \calV} \Hom (W_i,V_i),$$
the group and its Lie algebra $$\rmG_{\vv}=\varprod_{i\in \calV} \GL(V_i)$$ $$ \g_{\vv}=\bigoplus_{i\in \calV} \gl(V_i),$$ and the natural representation 
$$\rho_{\vv,\w}:\rmG_{\vv}\to \GL(\V_{\vv,\w}),$$ with derivative $$\varrho_{\vv,\w}:\g_{\vv}\to \gl(\V_{\vv,\w}).$$ The action is  from both left and right
on the first term, and from the left on the second.  We will also need the double representation $$\overline{\rho}_{\vv,\w}:=\rho_{\vv,\w}\times \rho^*_{\vv,\w}:\rmG_{\vv}\to \GL(\V_{\vv,\w}\times \V^*_{\vv,\w}),$$
with derivative: $$\overline{\varrho}_{\vv,\w}:=\varrho_{\vv,\w}\oplus \varrho^*_{\vv,\w}:\g_{\vv}\to \gl(\V_{\vv,\w}\times \V^*_{\vv,\w}).$$  Concretely, if $(v,w)\in \V_{\vv,\w}\times \V^*_{\vv,\w}$ is given by \beq \label{concret} (A_{e},I_i,B_{e}, J_i)_{e\in \calE, i\in \calV, }, \eeq where 
$A_{e}\in \Hom(V_{s(e)},V_{t(e)})$, $B_{e} \in \Hom(V_{t(e)},V_{s(e)})$ for $e\in \calE$
, $I_i\in \Hom(W_i,V_i)$  and $J_i\in \Hom(V_i,W_i)$ for $i\in\calV$ and $g=(g_i)_{i\in\calV}\in \g_v$, where $g_i\in \GL(V_i)$  and finally $x=(x_i)_{i\in\calV}\in \g_v$, where $x_i\in \gl(V_i)$ then $$\overline{\rho}_{\vv,\w}(g)(v,w)=({\rho}_{\vv,\w}(g)(v),{\rho}^*_{\vv,\w}(g)(w))=(g_{t(e)}A_{e}g_{s(e)}^{-1},g_iI_i,g_{s(e)}B_{e}g_{t(e)}^{-1},Jg_i^{-1})_{e\in \calE, i\in \calV, },$$ and \beq \label{varrhobar}\overline{\varrho}_{\vv,\w}(x)(v,w)=({\varrho}_{\vv,\w}(x)(v),{\varrho}_{\vv,\w}^*(x)(w))=(x_{t(e)}A_e-A_{e}x_{s(e)},x_iI_i,x_{s(e)}B_{e}-B_{e}x_{t(e)},-Jx_i)_{e\in \calE, i\in \calV}.\eeq

We now have $\rmG_{\vv}$ acting on $\M_{\vv,\w}=\V_{\vv,\w}\times \V_{\vv,\w}^*$ preserving
the symplectic form  with moment map $$\mu_{\vv,\w}:\V_{\vv,\w}\times\V_{\vv,\w}^*\to \g_{\vv}^*$$ given by \beq \label{quivermoment} \langle \mu_{\vv,\w} (v,w),x\rangle=\langle \varrho_{\vv,\w} (x)v,w \rangle. \eeq   Concretely if $(v,w)\in \V_{\vv,\w}\times\V_{\vv,\w}$ is given by \eqref{concret}, then we can choose \beq \label{explicit}\mu_{\vv,\w}(v,w)=\left(I_iJ_i+\sum_{e\in s^{-1}(i)} B_{e}A_e-\sum_{e\in t^{-1}(i)} A_{e}B_{e}\right)^*_{i\in\calV}\in\g^*_\vv,\eeq where we identify any element $x=(x_i)_{i\in \calV}\in \g_\vv$ with the linear form $x^*:\g_\vv\to \K$ given by $$x^*\left((y_i)_{i\in \calV}\right)= \sum_{i\in \calV}\tr (x_i y_i)\in \K.$$

\begin{lemma} \label{lemma} Let $\w=\0$ and $\vv\in \N^\calV$ arbitrary. For $\lambda\in \K^\times$ let 
${\lambda \1_\vv}=(\lambda Id_{V_1},\dots,\lambda Id_{V_n})\in \g_\vv$.  Further assume ${\rm char} (\K)\nmid \sum_{i=1}\vv_i$ or equivalently $0_\K\neq 1_\K(\sum_{i=1}\vv_i)$. Then the equation $\mu_{\vv,\0}(v,w)=\lambda\1^*_\vv$ has a solution if and only if $\vv=\0$. \end{lemma}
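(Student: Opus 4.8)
The plan is to treat the two implications separately; the real content is a single trace identity. For the trivial direction, if $\vv=\0$ then every $V_i$ is the zero vector space, so $\g_\vv=0$, the space $\M_{\vv,\0}$ consists of the single point $(0,0)$, and $\mu_{\vv,\0}(0,0)=0=\lambda\1_\vv^*$ in $\g_\vv^*=0$; thus a solution exists. For the substantive direction, suppose $(v,w)\in\V_{\vv,\0}\times\V_{\vv,\0}^*$ satisfies $\mu_{\vv,\0}(v,w)=\lambda\1_\vv^*$. Writing $(v,w)$ in the coordinates $(A_e,I_i,B_e,J_i)$ of \eqref{concret} and noting that $\w=\0$ forces $W_i=0$ and hence $I_i=J_i=0$, the explicit formula \eqref{explicit} shows the hypothesis is equivalent to the system
\[
\sum_{e\in s^{-1}(i)}B_eA_e-\sum_{e\in t^{-1}(i)}A_eB_e=\lambda\,\mathrm{Id}_{V_i}\in\gl(V_i)\qquad(i\in\calV).
\]

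The key step is to take traces and sum over the vertices. Applying $\tr$ to the $i$-th equation yields $\sum_{e\in s^{-1}(i)}\tr(B_eA_e)-\sum_{e\in t^{-1}(i)}\tr(A_eB_e)=\lambda\vv_i$ in $\K$. Summing over $i\in\calV$ and using that $s,t\colon\calE\to\calV$ are functions---so that $\sum_{i\in\calV}\sum_{e\in s^{-1}(i)}(\cdots)=\sum_{e\in\calE}(\cdots)$, and likewise for $t$---the left-hand side collapses to $\sum_{e\in\calE}\bigl(\tr(B_eA_e)-\tr(A_eB_e)\bigr)$, which is $0$ because $\tr(B_eA_e)=\tr(A_eB_e)$ for each individual edge; loops present no difficulty, a loop at $i$ merely contributing to both inner sums with $s(e)=t(e)=i$. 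Hence $\lambda\sum_{i\in\calV}\vv_i=0$ in $\K$. (Intrinsically this is just the observation that $\1_\vv\in\ker\varrho_{\vv,\0}$, so by \eqref{quivermoment} every value of $\mu_{\vv,\0}$ annihilates $\1_\vv$, while $\langle\lambda\1_\vv^*,\1_\vv\rangle=\lambda\sum_{i\in\calV}\vv_i$.)

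To conclude I would invoke the arithmetic hypothesis: the element $\sum_{i\in\calV}\vv_i$ viewed in $\K$ is $1_\K\cdot(\sum_{i\in\calV}\vv_i)$, which is nonzero because $\char(\K)\nmid\sum_{i\in\calV}\vv_i$; since $\K$ is a field and $\lambda\in\K^\times$, the relation $\lambda\sum_{i\in\calV}\vv_i=0$ forces $\sum_{i\in\calV}\vv_i=0$ as a nonnegative integer, i.e. $\vv=\0$. I do not expect any genuine obstacle: the only points requiring a little care are checking that the $W$-contributions to the moment map truly vanish when $\w=\0$, and the combinatorial rearrangement of the double sum over vertices and incident edges into a single sum over $\calE$, which is exactly what makes the commutator-type terms cancel.
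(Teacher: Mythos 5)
Your proof is correct and is essentially the paper's argument: the paper pairs the moment map equation with $\1_\vv$ and uses $\varrho_{\vv,\0}(\1_\vv)=0$ (via \eqref{quivermoment}) to get $\lambda\sum_i\vv_i=0$, which is exactly the intrinsic reformulation you give in your parenthetical remark; your coordinate computation with traces is just the explicit form of the same cancellation. The only (harmless) difference is that you also spell out the trivial converse direction, which the paper omits.
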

\begin{proof} As $\w=\0$ by \eqref{varrhobar} we have ${\varrho}_{\vv,\0}(\1_\vv)(v)=0$. Now assume $\mu_{\vv,\0}(v,w)=\lambda\1^*_\vv$ then by \eqref{quivermoment} we have $$ \lambda\sum_{i\in \calV}\vv_i=\langle \mu_{\vv,\0} (v,w),\1_v\rangle=\langle \varrho_{\vv,\0} (\1_\vv)v,w\rangle=0,$$ thus ${\rm char}(\K)\mid \sum_{i\in \calV} \vv_i$.
\end{proof}

\begin{proposition}\label{crucialcor} Let $\w,\vv\in \N^\calV$ arbitrary and $\char(\K)=0$ or $\char(\K)>\sum_{i\in \calV} \vv_i.$ For $\lambda\in \K^\times$, and a solution \beq \label{solnlambda} \mu_{\vv,\w}(v,w)=\lambda\1^*_\vv\eeq assume that for a $x\in \g_\vv$  \beq \label{fixed}\overline{\varrho}_{\vv,\w}(x)(v,w)=0\eeq then $x=0$. 
\end{proposition}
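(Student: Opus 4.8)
The plan is to run a trace argument on a well-chosen $x$-invariant subspace, in the same spirit as the proof of Lemma~\ref{lemma} but using the given stabilizer element $x$ in place of $\1_\vv$. Writing $(v,w)$ in the coordinates \eqref{concret}, the hypothesis \eqref{fixed} unwinds through \eqref{varrhobar} into the relations $x_{t(e)}A_e=A_ex_{s(e)}$ and $x_{s(e)}B_e=B_ex_{t(e)}$ for all $e\in\calE$, together with $x_iI_i=0$ and $J_ix_i=0$ for all $i\in\calV$. Set $U_i:=\im(x_i)\subseteq V_i$. From the first two relations, $A_e(U_{s(e)})=A_e(\im x_{s(e)})\subseteq\im x_{t(e)}=U_{t(e)}$ and likewise $B_e(U_{t(e)})\subseteq U_{s(e)}$; from $J_ix_i=0$ the map $J_i$ vanishes on $U_i$, so $I_iJ_i$ annihilates $U_i$. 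Hence each operator occurring in the moment map expression \eqref{explicit} at vertex $i$ --- namely $I_iJ_i$, the $B_eA_e$ with $e\in s^{-1}(i)$, and the $A_eB_e$ with $e\in t^{-1}(i)$ --- maps $U_i$ into $U_i$.

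Next I would restrict the moment map equation \eqref{solnlambda}, written at vertex $i$ as $I_iJ_i+\sum_{e\in s^{-1}(i)}B_eA_e-\sum_{e\in t^{-1}(i)}A_eB_e=\lambda\,\mathrm{Id}_{V_i}$, to the invariant subspace $U_i$, take the trace over $U_i$, and sum over $i\in\calV$. The contributions of $I_iJ_i$ vanish because $J_i|_{U_i}=0$. Collecting the remaining terms by edges gives $\sum_{e\in\calE}\tr\big((B_eA_e)|_{U_{s(e)}}\big)-\sum_{e\in\calE}\tr\big((A_eB_e)|_{U_{t(e)}}\big)$, and since $A_e$ and $B_e$ restrict to linear maps $U_{s(e)}\to U_{t(e)}$ and $U_{t(e)}\to U_{s(e)}$ respectively, the identity $\tr(fg)=\tr(gf)$ shows the two edge-sums agree term by term and therefore cancel. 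What remains is the identity $\lambda\cdot\big(\sum_{i\in\calV}\dim_\K U_i\big)=0$ in $\K$.

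Since $\lambda\in\K^\times$ and $\sum_{i\in\calV}\dim_\K U_i\le\sum_{i\in\calV}\vv_i$, the hypothesis that $\char(\K)=0$ or $\char(\K)>\sum_{i\in\calV}\vv_i$ forces $\sum_{i\in\calV}\dim_\K U_i=0$, i.e. $\im(x_i)=0$ and hence $x_i=0$ for every $i\in\calV$, that is $x=0$. There is no serious obstacle here; the one point to get right is the choice of the subspace $U_i=\im(x_i)$, which is precisely where the co-framing maps $J_i$ die while the quiver maps $A_e,B_e$ stay internal, so that the trace identity applies edge by edge. As in Lemma~\ref{lemma}, the arithmetic hypothesis on $\char(\K)$ is exactly what rules out the possibility $\lambda\cdot m=0$ with $m$ a nonzero multiple of $p$.
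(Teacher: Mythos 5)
Your argument is correct and follows the paper's proof in all essentials: you restrict to the $x$-invariant subspaces $U_i=\im(x_i)$, observe that $J_i$ vanishes there while $A_e,B_e$ stay internal, and derive $\lambda\sum_i\dim U_i=0$, which the characteristic hypothesis turns into $x=0$. The only cosmetic difference is that you carry out the trace cancellation $\tr(fg)=\tr(gf)$ by hand, whereas the paper packages the same computation as a reduction to Lemma~\ref{lemma} by pairing the restricted moment map with $\1_{\vv'}$.
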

\begin{proof}Let $V^\prime_i:=\im(x_i)\leq V_i$ with dimension $\vv^\prime_i$. As $\overline{\varrho}_{\vv,\w}(x)(v,w)=0$ we have that $J_ix_i=0$ for all $i\in \calV$ and so  $J_i$ is zero on $V^\prime_i\leq V_i$. On the other hand we also have that $x_{t(e)}A_e-A_ex_{s(e)}=0$ which implies that $A_e$ maps 
$V^\prime_{s(e)}\leq V_{s(e)}$ to $V^\prime_{t(e)}\leq V_{t(e)}$  and 
similarly $B_{e}$ maps $V^\prime_{t(e)}\leq V_{t(e)}$ to $V^\prime_{s(e)}\leq V_{s(e)}$. Finally $ \lambda\1^*_\vv$ restricts to $\lambda\1^*_{\vv^\prime}$. Thus a solution of \eqref{solnlambda} reduces to a solution of 
$\mu_{\vv^\prime,\0}(v^\prime,w^\prime)=\lambda\1^*_{\vv^\prime}$. By our assumption on the characteristic of $\K$ and 
Lemma~\ref{lemma} we have that $\sum_{i\in\calV} \vv^\prime_i =0$ or equivalently $x=0$. 
\end{proof}

\begin{corollary}\label{simple} Let $\w,\vv\in \N^\calV$ arbitrary and $\char(\K)=0$ or $\char(\K)>\sum_{i\in \calV} \vv_i.$ 
Assume that a solution of \eqref{solnlambda} is invariant under $g\in \rmG_\vv$, then $g=\1_\vv$. \end{corollary}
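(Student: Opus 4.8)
The plan is to deduce this immediately from Proposition~\ref{crucialcor} by linearizing the fixed-point condition. Suppose $(v,w)$ is a solution of \eqref{solnlambda} fixed by $g=(g_i)_{i\in\calV}\in\rmG_\vv$, i.e.\ $\overline{\rho}_{\vv,\w}(g)(v,w)=(v,w)$, and set $x:=g-\1_\vv$. This makes sense as an element of $\g_\vv$: as a $\K$-vector space $\g_\vv=\bigoplus_{i\in\calV}\gl(V_i)=\bigoplus_{i\in\calV}\End(V_i)$ contains $\rmG_\vv=\prod_{i\in\calV}\GL(V_i)$, so $x_i:=g_i-Id_{V_i}\in\gl(V_i)$. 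The point is that $x$ will satisfy the hypothesis \eqref{fixed} of Proposition~\ref{crucialcor}, and that proposition then forces $x=0$.

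To carry this out I would first unwind $\overline{\rho}_{\vv,\w}(g)(v,w)=(v,w)$ componentwise: writing $(v,w)$ as in \eqref{concret}, it says $g_{t(e)}A_e=A_eg_{s(e)}$ and $g_{s(e)}B_e=B_eg_{t(e)}$ for all $e\in\calE$, together with $g_iI_i=I_i$ and $J_ig_i=J_i$ for all $i\in\calV$. Substituting $x=g-\1_\vv$ into \eqref{varrhobar} and cancelling the $Id$ terms in pairs then gives $x_{t(e)}A_e-A_ex_{s(e)}=g_{t(e)}A_e-A_eg_{s(e)}=0$, $x_{s(e)}B_e-B_ex_{t(e)}=g_{s(e)}B_e-B_eg_{t(e)}=0$, $x_iI_i=g_iI_i-I_i=0$ and $-J_ix_i=-(J_ig_i-J_i)=0$, so $\overline{\varrho}_{\vv,\w}(x)(v,w)=0$. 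Since $(v,w)$ solves \eqref{solnlambda} and $\char(\K)=0$ or $\char(\K)>\sum_{i\in\calV}\vv_i$ is exactly the hypothesis of Proposition~\ref{crucialcor}, that proposition applies and yields $x=0$, i.e.\ $g=\1_\vv$.

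I do not expect any genuine obstacle here: all the real content sits in Proposition~\ref{crucialcor} (and behind it Lemma~\ref{lemma}, where the hypothesis on $\char(\K)$ is actually used). The only things to watch are the routine bookkeeping in the substitution above --- checking that the extra $-Id$'s coming from replacing $g$ by $g-\1_\vv$ cancel precisely in the conjugation-type $A_e,B_e$ slots and reproduce the expressions $g_iI_i-I_i$, $J_ig_i-J_i$ in the remaining slots --- and the mild abuse of notation whereby $\1_\vv$ denotes simultaneously the identity of the group $\rmG_\vv$ and of the Lie algebra $\g_\vv$.
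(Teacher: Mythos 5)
Your proof is correct and is essentially identical to the paper's: both set $x=g-\1_\vv\in\g_\vv$, observe that the fixed-point condition linearizes to $\overline{\varrho}_{\vv,\w}(x)(v,w)=0$, and invoke Proposition~\ref{crucialcor}. The componentwise verification you spell out is exactly the routine cancellation the paper leaves implicit.
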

\begin{proof} We have $\overline{\rho}_{\vv,\w}(g)(v,w)=(v,w)$ and so $g-\1_\vv\in \g_\vv$ satisfies $\overline{\varrho}_{\vv,\w}(g-\1_\vv)(v,w)=0$ thus Proposition~\ref{crucialcor} implies $g=\1_\vv$. 
\end{proof}

\begin{corollary}\label{corsurj}  Let $\w,\vv\in \N^\calV$ arbitrary and either $\char(\K)=0$ or $\char(\K)>\sum_{i\in \calV} \vv_i.$  Then the derivative \beq \label{surj} T_{(v,w)}\mu_{\vv,\w}: T_{(v,w)} \M_{\vv,\w}\to T_{\mu_{\vv,\w}(v,w)} \g^*_\vv\eeq is surjective if and only 
$\overline{\varrho}_{\vv,\w}(x)(v,w)=0$ implies $x=0$. In particular  at a solution $(v,w)$ of \eqref{solnlambda} the derivative 
\eqref{surj} is surjective.\end{corollary}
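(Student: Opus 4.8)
The plan is to recognize the asserted equivalence as the standard relation between surjectivity of the derivative of a moment map and triviality of the infinitesimal stabilizer. All the geometric content is already packaged in the moment map identity \eqref{momentdefn}; what remains is a transpose argument in finite-dimensional $\K$-linear algebra, valid over $\F_q$ and requiring no hypothesis on $\char(\K)$. The characteristic assumption enters only in the final sentence, through Proposition~\ref{crucialcor}.

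First I would observe that, since $\M_{\vv,\w}$ and $\g^*_\vv$ are $\K$-vector spaces, the tangent spaces in \eqref{surj} are canonically $\M_{\vv,\w}$ and $\g^*_\vv$, so $T_{(v,w)}\mu_{\vv,\w}$ is simply a $\K$-linear map $\M_{\vv,\w}\to\g^*_\vv$. Applying \eqref{momentdefn} with $(v_1,w_1)=(v,w)$ and letting $Y=(v_2,w_2)$ range over $\M_{\vv,\w}$, for every $x\in\g_\vv$ we get
$$\langle T_{(v,w)}\mu_{\vv,\w}(Y),x\rangle=\omega\big(\overline{\varrho}_{\vv,\w}(x)(v,w),\,Y\big).$$
Next I would dualize: a linear map of finite-dimensional $\K$-vector spaces is surjective if and only if its transpose is injective, and by the displayed identity the transpose of $T_{(v,w)}\mu_{\vv,\w}$ is the map $\g_\vv=(\g^*_\vv)^*\to\M^*_{\vv,\w}$ sending $x$ to the functional $Y\mapsto\omega\big(\overline{\varrho}_{\vv,\w}(x)(v,w),Y\big)$. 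Since $\omega$ is nondegenerate, this functional vanishes precisely when $\overline{\varrho}_{\vv,\w}(x)(v,w)=0$; hence the transpose is injective exactly when $\overline{\varrho}_{\vv,\w}(x)(v,w)=0$ forces $x=0$, which is the claimed equivalence.

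For the last sentence, at a solution $(v,w)$ of \eqref{solnlambda} the hypothesis on $\char(\K)$ lets us invoke Proposition~\ref{crucialcor}, which asserts exactly that $\overline{\varrho}_{\vv,\w}(x)(v,w)=0$ implies $x=0$; combined with the equivalence just proved, \eqref{surj} is surjective there. I do not expect a genuine obstacle: the only points needing care are the canonical identification of the transpose of $T_{(v,w)}\mu_{\vv,\w}$ with $x\mapsto\omega(\overline{\varrho}_{\vv,\w}(x)(v,w),-)$ and the appeal to nondegeneracy of $\omega$, and over a finite-dimensional $\K$-vector space a nondegenerate bilinear form still induces an isomorphism with the dual, so the argument goes through verbatim over $\F_q$.
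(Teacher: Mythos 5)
Your proof is correct and is essentially the paper's own argument: the paper likewise characterizes failure of surjectivity by the existence of a nonzero $x\in\g_\vv$ (acting as the functional $f^x$ on the target) annihilating the image of $T_{(v,w)}\mu_{\vv,\w}$, uses the moment map identity \eqref{momentdefn} to rewrite this as $\omega(\overline{\varrho}_{\vv,\w}(x)(v,w),Y)=0$ for all $Y$, and concludes by nondegeneracy of $\omega$, with Proposition~\ref{crucialcor} supplying the final sentence. Your explicit remark that the characteristic hypothesis is needed only for that last sentence is accurate and consistent with the paper.
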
 
\begin{proof} The map $T_{(v,w)}\mu_{\vv,\w}$ is not surjective if and only if
	there exists $0\neq x\in \g_\vv$ such that  for all $Y\in  T_{(v,w)} \M_{\vv,\w}$
	 $$0=T_{(v,w)}\mu_{\vv,\w}(Y)f^x=Y(\mu_{\vv,\w}\circ f^x)=df^x(Y)=\omega(\overline{\varrho}(x)(v,w),Y),$$ which holds when $\overline{\varrho}_{\vv,\w}(x)(v,w)=0$, $\omega$ being non-degenerate.  Here $f^x$ is the function defined
	in \eqref{fx} while the last equation follows from the moment map property 
	\eqref{momentdefn}. This proves the first statement.
	
	The second statement follows from the first and Proposition~\ref{crucialcor}.
\end{proof}

Now we assume $\K$ is algebraically closed and either $\char(\K)=0$ or \begin{eqnarray} \label{charcond}\char(\K)>\sum_{i\in \calV} \vv_i.\end{eqnarray} For $\lambda\in \K$ we take  ${\lambda \1_\vv}=(\lambda Id_{V_1},\dots,\lambda Id_{V_n})\in (\g_{\vv}^*)^{\rmG_{\vv}}$, and define the affine variety $${\mathcal{V}}_{\lambda}(\vv,\w)=\mu^{-1}_{\vv,\w}(\lambda \1_\vv).$$ For $l\in \Z$ we define the character $\chi^l:\rmG_\vv\to \K^\times$ by
$\chi^l(g)=\Pi_{i\in \calV} \det(g_i)^l$. With these we can define $$\K\left[{\mathcal{V}}_{\lambda}(\vv,\w)\right]^{\rmG_\vv,\chi^{l}}:=\left\{ f\in \K\left[{\mathcal{V}}_{\lambda}(\vv,\w)\right] | f(g(x))= \chi^l(g) f(x)  \mbox{ for all } x\in {\mathcal{V}}_{\lambda}(\vv,\w)\right\},$$ so $\bigoplus_{n\in \N} \K\left[{\mathcal{V}}_{\lambda}(\vv,\w)\right]^{\rmG_\vv,\chi^{ln}}$ becomes an $\N$-graded algebra and  so we can define the 
 Nakajima quiver variety \cite{nakajima} as the GIT quotient:
$$\calM_{l,\lambda}(\vv,\w)={\rm Proj}\left(\bigoplus_{n\in \N} \K\left[{\mathcal{V}}_{\lambda}(\vv,\w)\right]^{\rmG_\vv,\chi^{ln}} \right).$$  For now we note that as an affine GIT quotient 
 $$\calM_{0,\lambda}(\vv,\w)={\rm Spec} \left(\K\left[{\mathcal{V}}_{\lambda}(\vv,\w)\right]^{\rmG_\vv}\right)={\mathcal{V}}_{\lambda}(\vv,\w)/\!/\G_v$$ is an affine variety. Exactly as in \cite[Corollary 3.12]{nakajima} we get that $\calM_{1,\lambda}(\vv,\w)$ is non-singular of dimension \beq \label{dimension} 2d_{\vv,\w}=2\left(\sum_{e\in \calE} \vv_{s(e)}\vv_{t(e)}+\sum_{i\in \calV} \vv_i(\w_i-\vv_i)\right)\eeq for all $\lambda$. We also have 
\begin{lemma} For $\lambda \neq 0$ the variety $\calM_{0,\lambda}(\vv,\w)$
 is non-singular of dimension $2d_{\vv,\w}$ and hence $\calM_{1,\lambda}\cong \calM_{0,\lambda}$. \end{lemma}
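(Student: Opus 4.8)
The plan is to realize $\calM_{0,\lambda}(\vv,\w)$ as a genuine geometric quotient of the smooth affine variety $\mathcal{V}_\lambda(\vv,\w)=\mu_{\vv,\w}^{-1}(\lambda\1^*_\vv)$ by a free action of $\rmG_\vv$ all of whose orbits are closed, and then to observe that the GIT quotient $\calM_{1,\lambda}(\vv,\w)$ attached to the linearization $\chi^1$ cannot differ from it because no point fails to be stable.

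First I would record two facts valid for any $\lambda$. (i) $\mathcal{V}_\lambda(\vv,\w)$ is a closed $\rmG_\vv$-invariant subvariety of $\M_{\vv,\w}$: closed because it is a fibre of the morphism $\mu_{\vv,\w}$, invariant because $\lambda\1_\vv$ is central in $\g_\vv$, so $\lambda\1^*_\vv\in(\g^*_\vv)^{\rmG_\vv}$, and $\mu_{\vv,\w}$ is equivariant. (ii) By Corollary~\ref{corsurj} the derivative $T_{(v,w)}\mu_{\vv,\w}$ is surjective at every $(v,w)\in\mathcal{V}_\lambda(\vv,\w)$, so $\lambda\1^*_\vv$ is a regular value of $\mu_{\vv,\w}$ and $\mathcal{V}_\lambda(\vv,\w)$ is smooth of pure dimension $2\dim_\K\V_{\vv,\w}-\dim_\K\g_\vv$ wherever it is non-empty. (If it is empty — for instance when $\w=\0$ and $\vv\neq\0$, by Lemma~\ref{lemma} — then $\calM_{0,\lambda}(\vv,\w)=\emptyset$ and the assertion is vacuous; this causes no trouble in the Betti-number applications.)

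The hypothesis $\lambda\neq 0$ enters through the closedness of orbits. I would argue by Hilbert--Mumford: if some orbit $\rmG_\vv\cdot(v,w)$ in $\mathcal{V}_\lambda(\vv,\w)$ were not closed, there would be a non-trivial one-parameter subgroup $\eta:\K^\times\to\rmG_\vv$ for which $(v_0,w_0):=\lim_{t\to 0}\eta(t)\cdot(v,w)$ exists and lies outside the orbit. Since $\mathcal{V}_\lambda(\vv,\w)$ is closed and invariant, $(v_0,w_0)\in\mathcal{V}_\lambda(\vv,\w)$; and the reparametrization $\eta(s)\cdot(v_0,w_0)=\lim_{t\to 0}\eta(st)\cdot(v,w)=(v_0,w_0)$ shows $(v_0,w_0)$ is fixed by all of $\eta(\K^\times)$, so its infinitesimal generator $0\neq x\in\g_\vv$ satisfies $\overline{\varrho}_{\vv,\w}(x)(v_0,w_0)=0$, contradicting Proposition~\ref{crucialcor}. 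Together with Corollary~\ref{simple} (trivial stabilizers) and Proposition~\ref{crucialcor} again (the action is infinitesimally, hence scheme-theoretically, free), this makes $\mathcal{V}_\lambda(\vv,\w)\to\mathcal{V}_\lambda(\vv,\w)/\!/\rmG_\vv=\calM_{0,\lambda}(\vv,\w)$ a principal $\rmG_\vv$-bundle; in particular it is a geometric quotient, smoothness descends, and $\dim\calM_{0,\lambda}(\vv,\w)=\dim\mathcal{V}_\lambda(\vv,\w)-\dim\rmG_\vv=2\dim_\K\V_{\vv,\w}-2\dim_\K\g_\vv=2d_{\vv,\w}$ in view of \eqref{dimension}.

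Finally, for $\calM_{1,\lambda}\cong\calM_{0,\lambda}$ I would use the affine form of the Hilbert--Mumford criterion: a point of $\mathcal{V}_\lambda(\vv,\w)$ is $\chi^l$-semistable precisely when $\langle\chi^l,\eta\rangle\geq 0$ for every one-parameter subgroup $\eta$ such that $\lim_{t\to 0}\eta(t)\cdot(v,w)$ exists in $\mathcal{V}_\lambda(\vv,\w)$; by the argument of the previous paragraph there is no non-trivial such $\eta$, so every point is $\chi^l$-semistable, and, orbits being closed with trivial stabilizer, even $\chi^l$-stable, for every $l$. Hence the $\chi^l$-semistable locus is all of $\mathcal{V}_\lambda(\vv,\w)$ and $\calM_{l,\lambda}(\vv,\w)$ is again the geometric quotient $\mathcal{V}_\lambda(\vv,\w)/\rmG_\vv$, independently of $l$; taking $l=1$ gives $\calM_{1,\lambda}(\vv,\w)\cong\calM_{0,\lambda}(\vv,\w)$ (equivalently, the tautological projective morphism $\calM_{1,\lambda}(\vv,\w)\to\calM_{0,\lambda}(\vv,\w)$ is an isomorphism because both sides are geometric quotients of $\mathcal{V}_\lambda(\vv,\w)$ by the same free action). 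I expect the delicate point to be the closedness of the orbits and its correct translation into the sign-sensitive GIT (semi)stability statement; the rest is the regular-value theorem together with standard descent for free reductive-group actions.
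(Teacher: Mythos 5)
Your proof is correct, and its first half --- $\lambda\1^*_\vv$ is a regular value by Corollary~\ref{corsurj}, so $\mathcal{V}_\lambda(\vv,\w)$ is smooth of dimension $\dim\M_{\vv,\w}-\dim\g_\vv$, and the $\rmG_\vv$-action on it is free by Corollary~\ref{simple}, so the quotient is smooth of dimension $2d_{\vv,\w}$ --- is exactly the paper's argument. Where you diverge is in the supporting detail and in the final isomorphism. The paper passes directly from ``the action is free'' to ``the affine quotient is non-singular,'' leaving implicit that all orbits in $\mathcal{V}_\lambda(\vv,\w)$ are closed (without which the affine GIT quotient need not be a geometric quotient); your Hilbert--Mumford argument --- any one-parameter-subgroup limit is a fixed point of that subgroup, which is forbidden by Proposition~\ref{crucialcor} (or, more directly at the group level, by Corollary~\ref{simple}, which also sidesteps the characteristic-$p$ subtlety that a non-trivial one-parameter subgroup can have vanishing derivative unless you first replace it by a primitive one) --- supplies exactly the missing ingredient. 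For $\calM_{1,\lambda}\cong\calM_{0,\lambda}$ the paper instead observes that the tautological map $\calM_{1,\lambda}\to\calM_{0,\lambda}$ is proper and a resolution of singularities of a variety already shown to be non-singular, and concludes it is an isomorphism; as stated this needs a word more (a proper birational map onto a smooth variety can be a blow-up), the real point being that the map is an isomorphism over the stable locus, which here is everything. Your route via King's criterion --- no non-trivial one-parameter subgroup admits a limit, so every point is $\chi^l$-stable for every $l$ and all the $\calM_{l,\lambda}$ coincide with the same geometric quotient --- proves precisely that, and is if anything the more airtight version of the paper's one-line conclusion.
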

\begin{proof} First we note that by Corollary~\ref{corsurj} at a solution of 
	\eqref{solnlambda} the derivative \eqref{surj} surjective. This shows that ${\mathcal{V}}_{\lambda}(\vv,\w)=\mu^{-1}_{\vv,\w}(\lambda\1_\vv)$ is non-singular of dimension $\dim(\M_{\vv,\w})-\dim(\g^*_\vv)$. Now by Corollary~\ref{simple} the action of $\G_\vv$ on ${\mathcal{V}}_\lambda({\vv,\w})$ is free, and 
	therefore the quotient $\calM_{0,\lambda}={\cal V}_\lambda({\vv,\w})/\!/\rmG_\vv$ is non-singular of dimension $\dim({\mathcal{V}}_{\lambda}(\vv,\w)) -\dim( G_\vv)=2d_{\vv,\w}$. 
	
	Finally by the GIT construction we have the map $\calM_{1,\lambda}(\vv,\w)\to \calM_{0,\lambda}$, which is proper and a resolution of singularities. Consequently it is an isomorphism. 
 	\end{proof}

 \begin{theorem} \label{pure} When $\K=\C$ the mixed Hodge structure on the isomorphic cohomologies $H^*(\calM_{1,0}(\vv,\w))\cong H^*(\calM_{0,1}(\vv,\w))$ is pure.
 \end{theorem}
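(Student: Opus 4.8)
The plan is to exhibit $\calM_{1,0}(\vv,\w)$ (equivalently $\calM_{0,1}(\vv,\w)$) as carrying a pure mixed Hodge structure by combining three standard facts: that a smooth variety admitting a torus action with proper fixed-point set and a suitable Bialynicki-Birula/Morse decomposition has cohomology generated by algebraic cycles, that the Nakajima quiver variety $\calM_{1,0}(\vv,\w)$ does admit such a $\C^\times$-action with these properties, and that an algebraic variety whose cohomology is generated by fundamental classes of algebraic subvarieties automatically has pure Hodge structure (the classes living in $H^{2k}$ being of type $(k,k)$). First I would recall Nakajima's construction of the $\C^\times$-action on $\calM_{1,0}(\vv,\w)$: scaling the $B_e$ coordinates (or, more symmetrically, scaling all arrows by a common weight together with the symplectic form), which is compatible with the moment-map condition $\mu=0$ and descends to the GIT quotient. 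The key geometric input is that the composition $\calM_{1,0}(\vv,\w)\to\calM_{0,0}(\vv,\w)$ is projective and that the affine quotient $\calM_{0,0}(\vv,\w)$ has a $\C^\times$-action contracting it to a single point (the image of $0$), so that the limit $\lim_{t\to0}t\cdot x$ exists for every $x\in\calM_{1,0}(\vv,\w)$ and lands in the fixed-point locus, which is itself projective. This is exactly Nakajima's setup in \cite{nakajima,nakajima-tq}.

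Granting the existence of such a contracting $\C^\times$-action with projective fixed locus on the smooth variety $\calM_{1,0}(\vv,\w)$, the Bialynicki-Birula decomposition gives a filtration of $\calM_{1,0}(\vv,\w)$ by locally closed subvarieties, each an affine bundle over a connected component of the fixed-point set $\calM_{1,0}(\vv,\w)^{\C^\times}$. An ascending induction on this stratification, using the Gysin long exact sequences and the fact that the fixed locus is projective and smooth with the weaker property (proved inductively, or cited from \cite{nakajima-tq}) that its cohomology is itself pure and torsion-free, shows that $H^*(\calM_{1,0}(\vv,\w))$ is torsion-free and concentrated in even degrees, with the cohomology being spanned by the closures of the BB strata — i.e. by classes of algebraic cycles. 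By the functoriality of mixed Hodge structures, the class of an algebraic subvariety of complex codimension $k$ lies in the weight-$2k$, type-$(k,k)$ part of $H^{2k}$; since these span, the mixed Hodge structure on $H^{2k}(\calM_{1,0}(\vv,\w))$ is pure of weight $2k$. Equivalently one may invoke the standard statement that a smooth variety whose cohomology is generated by algebraic cycle classes is automatically cohomologically pure (see e.g. the discussion around polynomial-count varieties in \cite{hausel-villegas}); the point is that mixed Hodge purity here is a formal consequence of the algebraicity of a spanning set of cohomology classes together with smoothness.

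I expect the main obstacle to be verifying cleanly that the $\C^\times$-action is genuinely contracting on the whole of $\calM_{1,0}(\vv,\w)$ — that is, that $\lim_{t\to 0}t\cdot x$ exists for all $x$, not merely on a dense open set — and that the fixed-point locus is proper. This is where one really uses that $\calM_{1,0}(\vv,\w)\to\calM_{0,0}(\vv,\w)$ is projective and that $\calM_{0,0}(\vv,\w)$ itself is contracted to a point by the action (which follows because the invariant ring is nonnegatively graded with degree-zero part $\K$, the coordinate functions on $\V_{\vv,\w}$ all having positive or at least nonnegative weight under the chosen scaling, combined with $\mu$-homogeneity). Once properness of the fixed locus and existence of all limits are in hand, the rest is the by-now-routine BB + Gysin argument; I would cite \cite{nakajima-tq} for the combinatorial/topological part rather than redo it, and simply emphasize that their argument in fact produces a cohomology basis of algebraic cycle classes, which is precisely what purity of the mixed Hodge structure requires. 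A secondary, more bookkeeping-level point is to make sure the weight filtration statement is phrased for the (possibly non-compact) smooth variety $\calM_{1,0}(\vv,\w)$ correctly: purity here means $W_{k-1}H^k=0$ and $W_kH^k=H^k$, which for a spanning set of $(p,p)$ classes in even degree $2p$ is immediate from Deligne's theory.
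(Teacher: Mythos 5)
Your Bialynicki--Birula argument is a correct and standard way to prove purity of $H^*(\calM_{1,0}(\vv,\w))$, and it is close in spirit to what the paper ultimately invokes (a contracting $\C^\times$-action with proper fixed locus forces purity). The genuine gap is in your opening parenthetical ``(equivalently $\calM_{0,1}(\vv,\w)$)''. These are not isomorphic varieties: $\calM_{1,0}$ is the quotient at moment map value $0$ with nontrivial stability and typically contains positive-dimensional projective subvarieties, whereas $\calM_{0,1}=\mu_{\vv,\w}^{-1}(\1^*_\vv)/\!/\G_\vv$ is smooth \emph{affine}. They are diffeomorphic, so their cohomologies agree as graded vector spaces, but that identification is not a priori an isomorphism of mixed Hodge structures, so purity does not transfer. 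Worse, your contracting action does not even exist on $\calM_{0,1}$: scaling the linear data rescales the value of the moment map and therefore does not preserve the level set $\mu_{\vv,\w}^{-1}(\1^*_\vv)$. Since the point count in the proof of Theorem~\ref{maint} computes the $E$-polynomial of $\calM_{0,1}$, it is precisely purity of $H^*(\calM_{0,1})$ that the application requires, and your argument does not reach it.

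The paper's fix is to introduce an auxiliary variable: set $\mu(v,w,z)=\mu_{\vv,\w}(v,w)-z\1^*_\vv$ on $\M_{\vv,\w}\times\C$ and form the GIT quotient $\calM_1$ of $\mu^{-1}(0)$, a smooth variety fibering over $\C$ with fiber $\calM_{1,0}$ over $z=0$ and $\calM_{1,1}\cong\calM_{0,1}$ over $z=1$. The action $\lambda\cdot(v,w,z)=(\lambda v,\lambda w,\lambda^2 z)$ preserves this total space, contracts it to a proper fixed locus, and covers a positive-weight action on the base $\C$. The result of \cite[Appendix B]{hausel-etal} then gives purity of $H^*(\calM_1)$ together with the statement that restriction to \emph{every} fiber is an isomorphism of mixed Hodge structures; this is what yields purity for $\calM_{0,1}$ and simultaneously justifies the isomorphism $H^*(\calM_{1,0})\cong H^*(\calM_{0,1})$ that the theorem's statement takes for granted. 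To repair your proof, run your BB/semiprojectivity argument on the total space $\calM_1$ rather than on $\calM_{1,0}$, and then add the fibration step identifying the cohomology of the fibers with that of the total space.
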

 \begin{proof}  The proof is similar to \cite[Proposition 2.2.6]{hausel-etal}. We define 
	the map $\mu:\M_{\vv,\w}\times \C \to \g^*_\vv$ by $$\mu(v,w,z)=\mu_{\vv,\w}(v,w)-z \1_\vv^*.$$  We then take ${\mathcal{V}}:=\mu^{-1}{0}$ and define: 
	$$\calM_1:={\rm Proj}\left(\bigoplus_{n\in \N} \C\left[{\mathcal{V}}\right]^{\rmG_\vv,\chi^{n}} \right),$$ where we extended the $\rmG_\vv$ action trivially over $\C$. The forgetful map $f(v,w,z)=z$
	induces a map $f:\calM_1\to \C$. Now, similarly to \cite[Corollary 3.12]{nakajima},
	one can prove that $\calM_1$ is non-singular. Also $f$ is a submersion as 
	$\partial_z \mu=1_\vv$. Finally we have the  $\C^\times$-action \beq \label{ctimes}\lambda(v,w,z)=(\lambda v,\lambda w,\lambda^2 z)\eeq descending to a $\C^\times$-action on $\calM_1$. We have the natural $\C^\times$-equivariant proper map $\calM_1\to \calM_0$ where $\calM_0:\rm{Spec}[{\mathcal{V}}]^{\rmG_\vv}$ is the affine GIT quotient, with the $\C^\times$-action given by \eqref{ctimes}. It is clear that the fixed point set $(\M_{\vv,\w}\times \C)^{\C^\times}=\{\0\}$ consists of the origin, and also that every $\C^\times$-orbit on $\M_{\vv,\w}\times \C$ will have the origin $\0$ in its closure. It follows that the $\C^\times$ action on $\calM_0$ will have a single fixed point - the image of $\0$ by the quotient map-  and all $\C^\times$-orbits on $\calM_0$ will have
	this single fixed point $\0$ in their closure. It follows that $\calM_1^{\C^\times}$ is proper and for any $x\in \calM_1$ the $\lim_{\lambda\to 0} z\lambda$ exists. The statement now follows from \cite[Appendix B]{hausel-etal}. 
	 \end{proof}
	
Now we can determine the Betti numbers of quiver varieties.
\paragraph{\it Proof of Theorem~\ref{maint}.} The strategy of the proof is the following (cf. \cite[\S 2.1, Appendix]{hausel-villegas}, \cite[\S 2.5]{hausel-etal}). 
First we construct a spreading out of $\calM_{0,1}(\vv,\w)/\C$. In other words we construct a  scheme   $\Y$ over a finitely generated $\Z$-algebra $R$  together with a homomorphism $\varphi:R\to \C$, such that the extension of scalars gives $\Y_\varphi \cong \calM_{0,1}(\vv,\w)/\C$. Then for any  homomorphism $\varphi:R\to \F_q$ to finite fields we will use Proposition~\ref{main} to count the number of rational
points $\Y_\varphi(\F_q)$ and find it is a polynomial $E(q)$ in $q$. Then Katz's \cite[Theorem 6.1.2.3]{hausel-villegas} will
imply that the so-called $E$-polynomial satisfies $E(\calM_{0,1}(\vv,\w);x,y)=E(xy)$. Combining  Theorem~\ref{pure} and \cite[Proposition 2.5.2]{hausel-etal}   we can deduce that $$E(q)=\sum_{i} b_{2i}(\calM_{0,1}(\vv,\w)/\C) q^{d_{\vv,\w}-i},$$ which will yield Theorem~\ref{maint}.

So let $D=\sum {\vv_i}$ and let $R:=\Z[\frac{1}{D!}]$. The choice of $R$ will ensure that there is a homomorphism 
$\varphi:R\to \K$ if and only if $\char(\K)$ satisfies \eqref{charcond}. We can now construct everything $\M_{\vv,\w}(R)$ $\G_\vv(R)$ and $\g^*_\vv(R)$ over $R$, so that we will have $\mu_{\vv,\w}:\M_{\vv,\w}(R)\to \g^*_\vv(R)$. The easiest
is to think in terms of the explicit form \eqref{explicit} for $\mu_{\vv,\w}$. Let $\X$ be the closed affine subscheme of $\M_{\vv,\w}(R)$ given by the equations $\mu_{\vv,\w}(v,w)=\1_\vv$. This is clearly a spreading out of 
${\cal V}_1(\vv,\w)/\C$. 
We can now define $\Y=\rm{Spec}(R[\X]^{\G_\vv(R)}).$
As the natural map $\varphi: R\to \C $ is a flat morphism \cite[Lemma 2]{seshadri} implies that $\Y$ is indeed a spreading out of $\calM_{0,1}(\vv,\w)/\C$. Now if $\varphi:R\to \F_q$ a homomorphism then  by Corollary~\ref{simple} $\G_\vv(\F_q)$ acts
freely on ${\mathcal{V}}_{1}(\vv,\w)(\F_q)$ so  we have (cf. \cite[proof of Theorem 3.5.1]{hausel-villegas}) that $$|\Y_\varphi(\F_q)|=\frac{|{\mathcal{V}}_{1}(\vv,\w)(\F_q)|}{|\G_\vv(\F_q)|}.$$

For $\w\in \calV^\N$ we will determine
the grand generating function \beq \label{grandquiver}\Phi(\w)=
\sum_{\vv\in \calV^\N} \frac{|{\mathcal{V}}_{1}(\vv,\w)(\F_q)|}{|\G_\vv(\F_q)|} \frac{|\g_{\vv}|}{|\V_{\vv,\w}|}X^\vv=\sum_{\vv\in \calV^\N}\sum_{x\in \g_\vv} \frac{ a_{\varrho_{\vv,\w}
}(x) \Psi(\rm{\tr_\vv}(x))}{|\G_\vv(\F_q)|}X^\vv=\sum_{\vv\in \calV^\N}\sum_{[x]\in \g_\vv/\G_\vv} \frac{ a_{\varrho_{\vv,\w}
}(x) \Psi(\rm{\tr_\vv}(x))}{|\calC_x|}X^\vv.\eeq Here we first applied our main Proposition~\ref{main} to get $$|{\mathcal{V}}_{1}(\vv,\w)(\F_q)|=|\g_\vv|^{-1} |\V_{\vv,\w}| \sum_{x\in \g_\vv} a_{\varrho_{\vv,\w}
}(x) \Psi(\langle x, \1^*_\vv\rangle),$$  and  then rewrote the sum over $\g_\vv/\G_\vv$  the set of $\G_\vv$ orbits on $\g_\vv$ with $\calC_x
\subset \G_\vv$ standing for the centralizer of $x$. We also used the notation \beq \label{formal}X^\vv=\Pi_{i\in \calV}X_i^{\vv_i}.\eeq To understand now \eqref{grandquiver} we should recall some basic linear algebraic facts for  the collection $x=(x_i)_{i\in \calV}$ of linear endomorphisms. Any linear
endomorphism $x_i\in \gl(V_i)$ decomposes as $ x^{nil}_i\oplus x^{reg}_i$ on the decomposition $V_i=N(x_i)\oplus R(x_i)$, where 
$N(x_i)=\rm{ker}(x^{\vv_i}_i)$ and $R(x_i)=\rm{im}(x^{\vv_i}_i)$. By construction $x^{reg}_i$ will be a non-singular transformation on $R(x_i)$, while $x^{nil}_i$ will be nilpotent on $N(x_i)$. Similarly to \cite[Lemma 1]{feit-fine} it is straightforward to check that if $\phi\in \ker(\varrho_{\vv,\w}(x))\subset \V_{\vv,\w}$ then $\phi$ preserves the decomposition $V_i=N(x_i)\oplus R(x_i)$ and $W_i=W_i\oplus 0$. Consequently, we have $a_{\varrho_{\vv,\w}}(x)=a_{\varrho_{\vv^{nil},\w}}(x^{nil})a_{\varrho_{\vv^{reg},0}}(x^{reg}),$ similarly $\Psi(\tr_\vv(x))=\Psi(\tr_{\vv^{nil}}(x^{nil}))\Psi(\tr_{\vv^{reg}}(x^{reg}))=\Psi(\tr_{\vv^{reg}}(x^{reg}))$ as $
\tr_{\vv^{nil}}(x^{nil})=0,$ and clearly $|\calC_{x}|= |\calC_{x^{nil}}||\calC_{x^{reg}}|$. Finally observe that if     $x=x^{nil}$ is nilpotent $\Psi(\tr_\vv(x))=1$.

Now we can factor \eqref{grandquiver} as \beq \label{functional} 
\Phi(\w)=\Phi_{nil}(\w) \Phi_{reg}.\eeq 
Here  we used,  for a dimension vector $\w\in \calV^\N$, the 
generating functions 
$$\Phi_{nil}(\w)=
\sum_{\vv\in \calV^\N}\sum_{[x]\in (\g_\vv/\G_\vv)^{nil}} \frac{ a_{\varrho_{\vv,\w}
}(x)}{|\calC_x|}X^\vv 
$$ and
$$\Phi_{reg} = \sum_{\vv\in \calV^\N}\sum_{[x]\in (\g_\vv/\G_\vv)^{reg}} \frac{ a_{\varrho_{\vv,\0}
}(x) \Psi(\rm{\tr_\vv}(x))}{|\calC_x|}X^\vv. 
$$

 Finally we note, that when $\w=\0$ Lemma~\ref{lemma} implies that $\Phi(\0)=1$ and so \eqref{functional} yields  $\Phi_{reg}=\frac{1}{\Phi_{nil}(\0)}$, giving the result \beq \label{quotient}\Phi(\w)=\frac{\Phi_{nil}(\w)}{\Phi_{nil}(\0)}.\eeq We are left with  understanding
$\Phi_{nil}(\w)$,  which reduces to some explicit formulae for which we will need some new notation.

 Let $V_1$  and $V_2$ be finite dimensional $\F_q$ vector spaces of dimension $v_1$ and $v_2$ respectively. Let $\lambda^i=(\lambda^i_1,\lambda^i_2,\dots)$ be a partition of $v_i$, i.e.
$v_i=\lambda^i_1+\lambda^i_2+\dots$ and $\lambda^i_1\geq \lambda^i_2\geq \dots$ are positive integers. Let  $x_i\in\gl(V_i)$ be a nilpotent endomorphism of type $\lambda^i$, in
other words a nilpotent endomorphism with Jordan normal form consisting of nilpotent Jordan blocks of size $\lambda^i_j$. If $\calC_{x_i}=\{g\in\GL(V_i)| gx_i=x_ig\}$ denotes the centralizer of $x_i$ then it follows from  \cite[Theorem 3.4]{hua} $$|\calC_{x_i}|=q^{\langle\lambda^i,\lambda^i\rangle}\prod_k \prod_{j=1}^{m_k(\lambda^i)} (1-q^{-j}),$$ where $m_k(\lambda_i)$ is the multiplicity of the part $k$ in the partition $\lambda_i$, i.e. $\lambda_i=(1^{m_1(\lambda^i)},2^{m_2(\lambda^i)},\dots)$ in the multiplicity notation of partitions. Furthermore for any two partitions $\lambda^1=(1^{m_1(\lambda^1)},2^{m_2(\lambda^1)},\dots))$ and $\lambda^2=(1^{m_1(\lambda^2)},2^{m_2(\lambda^2)},\dots))$ we use the notation \beq\label{partitionotation}\langle \lambda^1,\lambda^2\rangle=\sum_{i,j} {\rm{min}}(i,j) m_i(\lambda^1)m_j(\lambda^2).\eeq

Then it follows from \cite[Lemma 3.3]{hua} that $$|\{ A\in {\rm{Hom}}(V_1,V_2) | A X_1=X_2A \}|=q^{\langle \lambda^1,\lambda^2\rangle}.$$ In particular when $\lambda_2=(1^{v_2})$ i.e. $X_2=0$, we get $$|\{ A\in {\rm{Hom}}(V_1,V_2) | A X_1=0 \}|=q^{\langle \lambda^1,1^{v_2}\rangle}.$$ 

These results put together lead to the required combinatorial formula:
$$\Phi_{nil}(\w)=\sum_{\lambda  \in \calP^\calV}  \frac{\left(\prod_{e\in \calE} q^{\langle \lambda^{s(e)},\lambda^{t(e)}\rangle} \right)\left(\prod_{i\in \calV} q^{\langle\lambda^i,1^{\w_i}\rangle}\right)}{\prod_{i\in \calV} \left(q^{\langle\lambda^i,\lambda^i\rangle}\prod_k \prod_{j=1}^{m_k(\lambda^i)} (1-q^{-j})\right) }X^{|\lambda|}.$$ Via \eqref{quotient} we obtain a formula for the generating function 
for $|{\cal V}_1(\vv,\w)(\F_q)|$. This shows that $|{\cal V}_1(\vv,\w)(\F_q)|$ is
a rational function in $q$, but as it is an integer for any prime power $q$ (satisfying \eqref{charcond}) it has to be a polynomial in $q$. Therefore Katz's 
\cite[Theorem 6.1.2.3]{hausel-villegas} applies and yields Theorem~\ref{main}. $\square$

\section{Solution of Kac's conjecture}
 A representation of $\Gamma$  over a field  $\K$ is a collection of finite dimensional $\K$-vector spaces $\{V_i\}_{i\in \calV}$
and linear maps $\{\varphi_e\}_{e\in \calE}$ such that $\varphi_e\in \Hom_\K(V_{s(e)},V_{t(e)})$. The dimension vector $\alpha\in \N^{\calV}$  encodes the dimensions $\alpha_i=\dim_\K (V_i)$. 
The building blocks of representations of $\Gamma$ are the {\em indecomposable} ones, that is ones that cannot be written as a non-trivial direct sum of two representations. Kac proved  in \cite[Theorems 1,2]{kac1} that there exists an indecomposable representation of $\Gamma$ of dimension vector $\alpha$ if and only if $\alpha$ is a root of a certain Kac-Moody Lie algebra $\g(\Gamma)$ associated to $\Gamma$. 

When the field $\K$ is not algebraically closed, like the finite fields $\F_q$ we will focus in this paper, then it is natural to study {\em absolutely indecomposable representations} of $\Gamma$, that is ones that are indecomposable over
the algebraic closure $\overline{\K}$. Kac in \cite{kac2} introduced the number $A_\Gamma(\alpha,q)$ of indecomposable representations of $\Gamma$ over the finite field $\K=\F_q$ of dimension vector $\alpha$.  He proved \cite[Proposition 1.5]{kac2} that $A_\Gamma(\alpha,q)$ is a polynomial in $q$ with integer coefficients. He went on to conjecture  \cite[Conjecture 1]{kac2}
that the constant term of this polynomial $A_\Gamma(\alpha,q)$ captures more information
on the associated Kac-Moody Lie algebra:

\begin{conjecture}[Kac] \label{kacconj}The constant term $$A_\Gamma(\alpha,0)=m_\alpha$$ equals the multiplicity of the weight $\alpha$ in $\g(\Gamma)$. 
\end{conjecture}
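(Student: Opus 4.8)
The plan is to deduce Conjecture~\ref{kacconj} by combining the main theorem, Theorem~\ref{maint}, with two known inputs: the identification of the $q=0$ specialization of \eqref{hausel} with the Weyl--Kac character formula, and Hua's formula for $A_\Gamma(\alpha,q)$. First I would use Hua's formula \cite[Theorem 4.9]{hua} which expresses the generating function $\sum_\alpha A_\Gamma(\alpha,q) X^\alpha$ (or rather its plethystic logarithm) in terms of exactly the sum over $\calP^\calV$ appearing in the denominator of the right-hand side of \eqref{hausel}. Concretely, the denominator $\Phi_{nil}(\0)=\sum_{\lambda\in\calP^\calV}\prod_{e\in\calE}q^{\langle\lambda^{s(e)},\lambda^{t(e)}\rangle}/\prod_{i\in\calV}(q^{\langle\lambda^i,\lambda^i\rangle}\prod_k\prod_{j=1}^{m_k(\lambda^i)}(1-q^{-j}))X^{|\lambda|}$ is, up to the standard Hall/Macdonald substitution $q\mapsto q^{-1}$ and sign conventions, Hua's expression; taking the plethystic (or Exp/Log) logarithm of this series recovers $\sum_\alpha A_\Gamma(\alpha,q)X^\alpha$. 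This exhibits the denominator on the RHS of \eqref{hausel} as a $q$-analogue of the Kac denominator: it factors as an infinite product whose exponents are governed by the root multiplicities of $\g(\Gamma)$ together with the extra ``$A$-polynomial'' corrections.

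Next I would take the $q=0$ specialization (more precisely, the constant term in $q$, after clearing the $(1-q^{-j})$ factors by the substitution $q\mapsto q^{-1}$ so everything becomes a power series in $q$) of the whole identity \eqref{hausel}. On the left, $b_{2i}$ with $i<d_{\vv,\w}$ contribute higher powers of $q$, so the constant term of the Betti generating function picks out $\sum_\vv b_0(\calM(\vv,\w))X^\vv$; this is just the number of connected components, which by the geometry of Nakajima quiver varieties is governed by the Weyl--Kac character formula for $L(\Lambda_\w)$ — this is the combinatorial content announced in the introduction and proved using the main result of \cite{nakajima}. On the right, the $q\to0$ limit of the quotient of the two theta-like sums is, by the same token, the Weyl--Kac character formula written as numerator over denominator. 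Matching the two sides and then using the Weyl--Kac denominator identity, the $q=0$ denominator must equal the classical Kac denominator $\prod_{\alpha>0}(1-X^\alpha)^{\dim\g(\Gamma)_\alpha}$. Comparing this with the $q=0$ specialization of the Hua/plethystic-logarithm description of the denominator from the previous paragraph forces $A_\Gamma(\alpha,0)=m_\alpha=\dim\g(\Gamma)_\alpha$, which is exactly the claim.

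The main obstacle, and the step that needs genuine care rather than formal manipulation, is the bookkeeping in the two limits: reconciling the sign conventions and the $q\leftrightarrow q^{-1}$ normalizations between (i) the form of \eqref{hausel} as a polynomial identity in $q$, (ii) Hua's formula as stated in \cite{hua} (which uses a different normalization of the Hall algebra and of partition statistics), and (iii) the plethystic exponential / $\lambda$-ring formalism needed to pass between the product side of Hua's formula and the sum $\sum_\alpha A_\Gamma(\alpha,q)X^\alpha$. One must verify that the $q\to0$ limit of the denominator is a well-defined power series (no poles survive), that the Exp/Log operations commute with this limit, and that the exponents that appear are genuinely the weight multiplicities $m_\alpha$ of $\g(\Gamma)$ and not, say, the root multiplicities restricted to real versus imaginary roots — the subtlety being that for imaginary roots the ``naive'' denominator contribution and the $A$-polynomial contribution must be disentangled. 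Once the normalizations are pinned down, the argument is: $\text{(Betti }q=0\text{)}=\text{Weyl--Kac}\ \Rightarrow\ \text{(denominator }q=0\text{)}=\text{Kac denominator}\ \Rightarrow\ A_\Gamma(\alpha,0)=m_\alpha$.
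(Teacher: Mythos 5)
Your overall architecture --- combine Theorem~\ref{maint}, the identification of the $q=0$ specialization of \eqref{hausel} with the Weyl--Kac character formula via Nakajima's theorem, and Hua's formula \eqref{hua} --- matches the paper's, but there are two problems. The lesser one: the generating function in \eqref{hausel} is $\sum_i b_{2i}\,q^{d_{\vv,\w}-i}$, so the constant term in $q$ is attained at $i=d_{\vv,\w}$ and picks out the \emph{middle-degree} Betti number $b_{2d_{\vv,\w}}(\calM(\vv,\w))$, not $b_0$. It is the middle cohomology, via \eqref{nakajima}, that carries the weight-space decomposition of $L(\Lambda_\w)$; the number of connected components is not governed by the Weyl--Kac character formula, so your stated geometric input is wrong even though the conclusion you want from this step, namely $(\mbox{LHS of \eqref{hausel}})_{q=0}=\mbox{LHS of \eqref{weyl-kac}}$, is correct.

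The genuine gap is the sentence ``matching the two sides and then using the Weyl--Kac denominator identity, the $q=0$ denominator must equal the classical Kac denominator.'' Equality of two ratios does not let you equate their denominators separately; the numerator and denominator of \eqref{weyl-kac} and of the RHS of \eqref{hausel} are not canonically determined by their quotient. This is exactly the difficulty the paper's argument is built to overcome, and its device is absent from your proposal: set $\w=m\1$ (so $\Lambda_\w=m\rho$) and let $m\to\infty$. On the quiver side the numerator of the RHS of \eqref{hausel} tends to $1$ because $\langle\lambda^i,1^m\rangle\geq m$ for every nonempty partition $\lambda^i$; on the Lie-theoretic side, applying the denominator identity \eqref{kac} to both numerator and denominator of \eqref{weyl-kac} with $\Lambda=m\rho$ gives $\prod_\alpha(1+X^\alpha+\dots+X^{m\alpha})^{m_\alpha}$, which tends to $1/\prod_\alpha(1-X^\alpha)^{m_\alpha}$. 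Only after this limit does one obtain a direct identity between $1/(\mbox{LHS of \eqref{hua}})_{q=0}$ and $1/\prod_\alpha(1-X^\alpha)^{m_\alpha}$; Hua's product formula at $q=0$ then gives $(\mbox{LHS of \eqref{hua}})_{q=0}=\prod_\alpha(1-X^\alpha)^{t^\alpha_0}$, whence $t^\alpha_0=m_\alpha$. Without some mechanism of this kind for isolating the denominator, your argument does not close. The normalization issues you raise ($q\leftrightarrow q^{-1}$, plethystic logarithms) are real but secondary, and the paper sidesteps the plethystic formalism entirely by using Hua's product form directly.
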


 The Kac-Moody algebra \cite{kac0} is constructed from the Cartan matrix  of the quiver as an analouge of a simply-laced simple Lie algebra, which arise from the $ADE$ Dynkin diagrams. 
 Instead of giving details of that construction we give a combinatorial definition of 
the numbers $m_\alpha$. Let $(\alpha_i)_{i\in \calV}$ be the standard
basis of $\Z^{\calV}$. Define a bilinear pairing on $\Z^\calV$  by setting $(\alpha_i,\alpha_j)=\delta_{ij} - \frac{b_{ij}}{2}$, where $b_{ij}$ is the number of 
edges of $\Gamma$ between $i$ and $j$.  One defines the reflections $r_i:\Z^\calV\to \Z^\calV$ by $r_i(\lambda)=\lambda-2(\lambda,\alpha_i)\alpha_i$. The subgroup  $W=\langle (r_i)_{i\in \calV} \rangle \subset {\rm Aut}(\Z^\calV)$ generated by these reflections is the Weyl group. Extend now the action of $W$ to the lattice
$\Z^{\calV}\oplus \Z\rho$ by $r_i(\rho)=\rho-\alpha_i$. For any $w\in W$ 
we have $\rho - w(\rho)\in \N^\calV\setminus 
\{ {\mathbf 0} \}$.
So if for an $\alpha=\sum_i k_i \alpha_i\in \Z^{\calV}$ we write $X^\alpha=\prod_{i\in \calV} X_i^{k_i}$ then we can write the product expansion of the following sum \beq \label{kac}\sum_{w\in W} \det(w) X^{\rho-w(\rho)} = \prod_{\alpha \in \N^\V} (1-X^\alpha)^{m_\alpha},\eeq defining the integers $m_\alpha$. When $m_\alpha$ is defined through the usual route as the multiplicity of the weight $\alpha$ in $\g(\Gamma)$, \eqref{kac} is called the Kac denominator formula. 

There is a similar generating formula encoding the coefficients
of the $A$-polynomial due to Hua \cite{hua}. If $A_\Gamma(\alpha,q)=\sum_j t^\alpha_j q^j$ then we have \cite[Theorem 4.9]{hua}

\begin{theorem}[Hua] If for a
collection
of partitions $\lambda=(\lambda_i)_{i\in \calV} \in \P^{\calV}$ we denote the vector  $|\lambda|=(|\lambda_i|)_{i\in \calV}\in \N^{\calV}$  of the sizes of the partitions, then 
\beq \label{hua} \sum_{\lambda  \in \calP^\calV}   \frac{\prod_{e\in \calE} q^{\langle \lambda^{s(e)},\lambda^{t(e)}\rangle}}{\prod_{i\in \calV} \left(q^{\langle\lambda^i,\lambda^i\rangle}\prod_k \prod_{j=1}^{m_k(\lambda^i)} (1-q^{-j})\right) }X^{|\lambda|} = \prod_{\alpha\in \N^{\calV}} \prod_{i=0}^\infty \prod_{j=0}^\infty (1-q^{i+j} X^\alpha)^{t^\alpha_j}  \eeq \end{theorem}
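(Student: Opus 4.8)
The plan is to recognise both sides of \eqref{hua} as presentations of a single plethystic exponential and to prove the resulting identity by counting indecomposables of an auxiliary category.

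First I would rewrite the left-hand side as a groupoid count. For a partition $\lambda$ put $T_\lambda=\bigoplus_j\F_q[t]/(t^{\lambda_j})$, the finite-length $\F_q[[t]]$-module of type $\lambda$. By \cite[Lemma 3.3 and Theorem 3.4]{hua} (the Hom- and centraliser-order formulas recalled just above \eqref{partitionotation}) one has $q^{\langle\lambda,\mu\rangle}=|\Hom_{\F_q[[t]]}(T_\lambda,T_\mu)|$ and $q^{\langle\lambda,\lambda\rangle}\prod_k\prod_{j=1}^{m_k(\lambda)}(1-q^{-j})=|{\rm Aut}_{\F_q[[t]]}(T_\lambda)|$, so each summand on the left of \eqref{hua} is $\prod_e|\Hom_{\F_q[[t]]}(T_{\lambda^{s(e)}},T_{\lambda^{t(e)}})|$ divided by $\prod_i|{\rm Aut}_{\F_q[[t]]}(T_{\lambda^i})|$. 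Orbit--stabiliser then identifies the whole sum with $\sum_{[M]}X^{\underline{\dim}\,M}/|{\rm Aut}(M)|$, the groupoid count of the category $\mathcal C$ of representations of $\Gamma$ valued in finite-length $\F_q[[t]]$-modules --- equivalently, of pairs consisting of a representation of $\Gamma$ over $\F_q$ together with a commuting nilpotent endomorphism (a generalised Feit--Fine count, cf.\ \cite{feit-fine}). On the other side, a routine $\log$-computation shows that the right-hand side of \eqref{hua} is exactly the $q=0$ expansion of $\mathrm{Exp}\big(\tfrac1{q-1}\sum_{\alpha\neq\0}A_\Gamma(\alpha,q)X^\alpha\big)$, where $\mathrm{Exp}$ denotes the plethystic exponential for the Adams operations $q\mapsto q^m$, $X_i\mapsto X_i^m$; this is a well-defined element of $\Qu(q)[[X_1,\dots,X_n]]$ with rational-function coefficients. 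Thus the theorem is equivalent to: the groupoid count of $\mathcal C$ (which lives naturally in the $q=\infty$ completion, with its factors $(1-q^{-j})^{-1}$) is the $q=\infty$ expansion of that same plethystic exponential.

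Second I would evaluate the groupoid count of $\mathcal C$. Every object of $\mathcal C$ decomposes uniquely into indecomposables (Krull--Schmidt holds, as $\Hom$-spaces in $\mathcal C$ are finite over $\F_q$); the non-vanishing of $\Hom$-spaces between non-isomorphic indecomposables prevents a naive product decomposition, but Hua's rearrangement of the partition sum --- or, more conceptually, the integration map out of the Hall algebra of $\mathcal C$ --- nonetheless expresses the count as $\mathrm{Exp}$ of a generating series $\sum_{\alpha\neq\0}c_\alpha(q)X^\alpha$ governed by the indecomposables of $\mathcal C$ of total $\F_q$-dimension vector $\alpha$. It remains to identify $c_\alpha(q)$. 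Because $\F_q[[t]]\Gamma=\F_q\Gamma\otimes_{\F_q}\F_q[[t]]$, an indecomposable of $\mathcal C$ is, via its $t$-adic filtration, a thickening of an indecomposable $\F_q\Gamma$-module, and summing over the thickenings contributes precisely the factor $\tfrac1{q-1}=\sum_{i\geq1}q^{-i}$ (this is what manufactures the extra index $i$ in $q^{i+j}$ on the right of \eqref{hua}, and where \cite{feit-fine} enters). The indecomposable $\F_q\Gamma$-modules are in turn enumerated, via Krull--Schmidt over $\F_q$ and Galois descent along $\F_q\subset\overline{\F_q}$, by Frobenius orbits of absolutely indecomposable $\overline{\F_q}\Gamma$-modules; by Kac's theorem \cite{kac1} these exist exactly for the positive roots $\alpha$ of $\g(\Gamma)$, and over every extension $\F_{q^m}$ they are counted by the single polynomial $A_\Gamma(\alpha,-)$, so the sum over orbit sizes $m$ is an Adams-operation sum and assembles $A_\Gamma(\alpha,q)$ into the argument of $\mathrm{Exp}$. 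This yields $c_\alpha(q)=A_\Gamma(\alpha,q)/(q-1)$, hence \eqref{hua} (recovering \cite[Theorem 4.9]{hua}).

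The hard part is the middle step of the second paragraph: showing that the $\Hom$-space cross-terms in the Krull--Schmidt expansion of the groupoid count, twisted by the $t$-adic filtration, reorganise exactly into $\mathrm{Exp}$ of $\tfrac1{q-1}$ times an honest generating function of indecomposables --- i.e.\ that every power of $q$ lands where \eqref{hua} demands. Carrying this out needs either a precise integration-map identity for the (non-hereditary) category $\mathcal C$, or Hua's explicit $q$-hypergeometric manipulation of the partition sum; it also rests on the fact that Galois descent for $\GL_\alpha$ is unobstructed (Lang's theorem), so that isomorphism classes of indecomposable $\F_q\Gamma$-modules of dimension $\alpha$ genuinely biject with Frobenius-stable absolutely indecomposables, together with Kac's polynomiality of $A_\Gamma(\alpha,-)$ uniformly in the base field. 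A last, bookkeeping-level subtlety is that the two sides of \eqref{hua} literally live in different completions --- the left side has coefficients that are rational functions of $q$ expanded at $q=\infty$, the right side coefficients expanded at $q=0$ --- so all of the above should be carried out in $\Qu(q)[[X_1,\dots,X_n]]$, with the final identity read coefficient-by-coefficient in $X$ as an equality of rational functions of $q$.
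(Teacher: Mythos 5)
The paper gives no proof of this statement: it is imported as a black box from Hua \cite[Theorem 4.9]{hua}, so there is no internal argument to measure your sketch against, and the comparison below is with Hua's actual proof. Your overall framing is correct and is the standard modern reading of the theorem: by orbit--stabiliser the left-hand side is the groupoid count $\sum_{[M]}X^{\underline{\dim}\,M}/|{\rm Aut}(M)|$ over representations of $\Gamma$ in finite-length $\F_q[[t]]$-modules (equivalently, pairs of an $\F_q$-representation and a commuting nilpotent endomorphism), the right-hand side is ${\rm Exp}\bigl(\tfrac{1}{q-1}\sum_{\alpha\neq\0}A_\Gamma(\alpha,q)X^\alpha\bigr)$, and the identity is one of rational functions of $q$ read coefficientwise in $X$, the two sides being the expansions at $q=\infty$ and $q=0$ respectively. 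Your explicit flagging of the two completions, and the role of Galois descent, Lang's theorem and the field-independence of $A_\Gamma(\alpha,\cdot)$, are all on target.

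However, the step you yourself single out as ``the hard part'' is not merely left open --- the mechanism you propose for it is wrong. It is false that an indecomposable object of $\mathcal C$ is a $t$-adic thickening of an indecomposable $\F_q\Gamma$-module: already for the quiver $A_2$ the object $\F_q[t]/(t^2)\to\F_q[t]/(t)$ given by the quotient map has local endomorphism ring, hence is indecomposable in $\mathcal C$, yet its dimension vector $(2,1)$ is not a multiple of any root; so ``summing over thickenings'' cannot manufacture a factor $1/(q-1)$ attached to $X^\alpha$ with $\alpha$ a root, and any count organised by the indecomposables of $\mathcal C$ would come out graded incorrectly. The actual point of adjoining the nilpotent endomorphism --- and the crux of Hua's argument --- is a radical cancellation on the side of $\F_q\Gamma$-modules, not of $\mathcal C$. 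Writing $M=\bigoplus N_i^{\oplus k_i}$ by Krull--Schmidt over $\F_q\Gamma$, both the number of nilpotent elements of ${\rm End}(M)$ and $|{\rm Aut}(M)|$ equal $|{\rm rad}\,{\rm End}(M)|$ times the corresponding count for $\prod_i{\rm Mat}_{k_i}(\F_{q^{d_i}})$, where $\F_{q^{d_i}}={\rm End}(N_i)/{\rm rad}$; the radical, which contains all the cross terms $\Hom(N_i,N_j)$ you were worried about, cancels in the ratio. Hence the left-hand side factors \emph{exactly}, not just up to rearrangement, as $\prod_{[N]}\sum_{k\ge0}q^{d_N(k^2-k)}|\GL_k(\F_{q^{d_N}})|^{-1}X^{k\dim N}=\prod_{[N]}\prod_{i\ge1}(1-q^{-i d_N}X^{\dim N})^{-1}$ over the indecomposable $\F_q\Gamma$-modules $N$, by the $q$-exponential identity; only then do Galois descent and Kac's results \cite{kac1,kac2} assemble this product into the plethystic exponential. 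With that substitution your first and third ingredients close up into a proof; as written, the middle step would fail.
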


As Hua points out in \cite{hua}[Corollary 4.10] if one could show that
the $q=0$ evaluation of the combinatorially defined LHS of \eqref{hua} 
would agree with the combinatorially defined LHS of \eqref{kac} then one would have that  \beq \label{agrees}  \sum_{\lambda\in \calP^{\calV}}\prod_{\alpha \in \N^\V} (1-X^\alpha)^{m_\alpha} = \prod_{\alpha\in \N^{\calV}}(1- X^\alpha)^{t^\alpha_0} \eeq
the $q=0$ evaluation of the RHS of \eqref{hua} agrees with the RHS of \eqref{kac}. In turn, by comparing coefficients, \eqref{agrees} implies Kac's Conjecture~\ref{kacconj} that $m_\alpha=t^\alpha_0$. 
However the combinatorial expressions on the RHS of \eqref{kac} and \eqref{hua} are difficult to evaluate in practice \cite{sevenhant-vandenbergh,mozgovoy}. 

In this paper we take an alternative route  to prove \eqref{agrees} and thus Conjecture~\ref{kacconj} by relating the RHS of 
\eqref{kac} and \eqref{hua} to the denominator of the Weyl-Kac character formula.
Recall the notation from \cite[\S 2]{nakajima}. Thus $P$ denotes a finitely generated free $\Z$-module, the {\em weight lattice}. 
Let $P^*=\Hom_\Z(P,\Z)$ with natural pairing $\langle,\rangle:P\times P^*\to \Z$. Now $\alpha_i\in P$ (i=1..n) are the linearly
independent {\em simple roots}. Let $Q=\Z^{\calV}=\Z[\alpha_1,\dots,\alpha_n]$ be the {\em root lattice}, we have encountered above. Have a symmetric bilinear form $(,)$ on $P$ such that
the {\em coroots} $h_i\in P^*$ 
given by $\langle h_i,\lambda\rangle =(\alpha_k,\lambda)$ satisfy that $\langle h_i,\alpha_j\rangle=\delta_{ij}-\frac{b_{ij}}{2}$. Thus the root
lattice $Q=\Z^\calV$ will inherit the pairing we already introduced above. 
Finally we pick $\Lambda_i\in P$ for $i=1,\dots,n$ such that $\langle h_j,\Lambda_i\rangle =\delta_{ij}$. For $\w\in \N^\calV$ we introduce $\Lambda_\w=\sum_i w_i\Lambda_i$. Finally we can define reflections $r_i:P\to P$ given by 
$r_i(\lambda)=\lambda-(\lambda,\alpha_i)\alpha_i$, which then generate the Weyl group $W$. It is clear that
the root lattice $Q$ is left invariant and the induced action of $W$ is the one we encountered above. Note also that $\Lambda_{\1}=\sum_i \Lambda_i$
plays the role of $\rho$ introduced above.

Now the Weyl-Kac character  formula   \cite[Theorem 10.4]{kac0} says the following:

\begin{theorem}[Kac]  Let  $L(\Lambda)$ be an irreducible representation of $\g(\Gamma)$ of highest weight  $\Lambda\in P$.  Let $L(\Lambda)=\oplus_{\alpha\in \N^{\calV}} L(\Lambda)_{\Lambda-\alpha}$ denote its weight space
decomposition. Then \beq \label{weyl-kac} \sum_{\alpha\in \N^{\calV}} \dim\left(L(\Lambda)_{\Lambda-\alpha}\right) X^{\alpha}=\frac{{\displaystyle \sum_{w\in W}} \det(w) X^{\Lambda+\rho-w(\Lambda+\rho)}}{{\displaystyle \sum_{w\in W} }\det(w) X^{\rho-w(\rho)}}\eeq 
\end{theorem}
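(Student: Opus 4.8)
The plan is to carry out the standard generalized-Casimir proof of the Weyl--Kac formula in the present setting, where $\g(\Gamma)$ is symmetric (hence symmetrizable) and $\Lambda\in P$ is taken dominant integral, so that $L(\Lambda)$ is integrable. Write $\ch V=\sum_\mu\dim(V_\mu)\,e^\mu$ for the formal character of a weight module with finite-dimensional weight spaces, let $\Delta_+$ be the positive roots of $\g(\Gamma)$ and $m_\alpha=\dim\g(\Gamma)_\alpha$ their multiplicities (the same $m_\alpha$ as in \eqref{kac}), and set $R:=e^\rho\prod_{\alpha\in\Delta_+}(1-e^{-\alpha})^{m_\alpha}$, a unit in the ambient ring of formal sums. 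Under the substitution $X^\alpha\leftrightarrow e^{-\alpha}$ the left side of \eqref{weyl-kac} becomes $e^{-\Lambda}\ch L(\Lambda)$ and the denominator on the right becomes $e^{-\rho}\sum_{w\in W}\det(w)e^{w(\rho)}$; so it suffices to prove the single identity $R\cdot\ch L(\Lambda)=\sum_{w\in W}\det(w)\,e^{w(\Lambda+\rho)}$ and then divide through by $R$, the special case $\Lambda=0$ (where $L(0)=\C$, $\ch L(0)=1$) supplying the denominator identity $R=\sum_{w\in W}\det(w)e^{w(\rho)}$ along the way.

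I would begin with the character of the Verma module $M(\mu)$: since $M(\mu)$ is free of rank one over $U(\mathfrak{n}^-)$ with cyclic vector of weight $\mu$, one gets $\ch M(\mu)=e^\mu\prod_{\alpha\in\Delta_+}(1-e^{-\alpha})^{-m_\alpha}=e^{\mu+\rho}/R$. Because $L(\Lambda)$ lies in category $\calO$, has all its weights $\le\Lambda$ (i.e.\ $\Lambda-\mu\in\N^\calV$), has $\dim L(\Lambda)_\Lambda=1$, and because for each fixed weight only finitely many $\mu\le\Lambda$ can contribute, $R\cdot\ch L(\Lambda)$ is a locally finite formal sum $\sum_{\mu\le\Lambda}c_\mu\,e^{\mu+\rho}$ with $c_\mu\in\Z$ and $c_\Lambda=1$; equivalently $\ch L(\Lambda)=\sum_{\mu\le\Lambda}c_\mu\,\ch M(\mu)$.

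The key tool is the generalized Casimir operator $\Omega$ built from the symmetric invariant bilinear form: it is well defined on every module in $\calO$, commutes with $\g(\Gamma)$, and acts on $M(\mu)$, on $L(\mu)$, and hence on the simple module $L(\Lambda)$ by the scalars $(\mu+\rho,\mu+\rho)-(\rho,\rho)$ and $(\Lambda+\rho,\Lambda+\rho)-(\rho,\rho)$. Consequently the transition matrix between $\{\ch M(\mu)\}$ and $\{\ch L(\mu)\}$ is block-triangular with respect to the value of $(\mu+\rho,\mu+\rho)$, so the expansion found above is supported on those $\mu$ with $(\mu+\rho,\mu+\rho)=(\Lambda+\rho,\Lambda+\rho)$. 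Together with $\mu\le\Lambda$ and the regularity of $\Lambda+\rho$ (valid since $\Lambda$ is dominant integral), the standard Tits-cone lemma forces $\mu+\rho\in W(\Lambda+\rho)$, a single $W$-orbit; writing $c_w$ for the coefficient at $\mu=w(\Lambda+\rho)-\rho$ this gives $R\cdot\ch L(\Lambda)=\sum_{w\in W}c_w\,e^{w(\Lambda+\rho)}$. Finally $R$ is $W$-anti-invariant, $w(R)=\det(w)\,R$ (each $r_i$ permutes $\Delta_+\setminus\{\alpha_i\}$, preserves multiplicities, negates $\alpha_i$, and $r_i(\rho)=\rho-\alpha_i$), while $\ch L(\Lambda)$ is genuinely $W$-invariant (the $e_i,f_i$ act locally nilpotently by integrability, so $L(\Lambda)$ restricts to a sum of finite-dimensional $\mathfrak{sl}_2$-modules at each vertex and $\dim L(\Lambda)_\mu=\dim L(\Lambda)_{r_i\mu}$). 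Hence $R\cdot\ch L(\Lambda)$ is $W$-anti-invariant; applying $w$ and comparing coefficients forces $c_w=\det(w)$, with the normalization fixed by the top term $c_\Lambda=1$. This is the desired identity, and dividing by $R$ yields \eqref{weyl-kac}.

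I expect the main obstacle to lie in the infinite-dimensional foundations underlying the first three paragraphs: constructing $\Omega$ and verifying it acts well on all of $\calO$ rather than just on highest-weight modules; checking that $\ch L(\Lambda)$, $R\cdot\ch L(\Lambda)$ and $\sum_{w\in W}\det(w)e^{w(\Lambda+\rho)}$ are genuinely locally finite formal sums (so that multiplication, the block-triangular truncation by Casimir eigenvalue, and the coefficient comparisons are all legitimate even when $W$ is infinite, as happens for wild $\Gamma$); and proving the Tits-cone lemma that $\mu\le\Lambda$ with $(\mu+\rho,\mu+\rho)=(\Lambda+\rho,\Lambda+\rho)$ forces $\mu+\rho$ into the single orbit $W(\Lambda+\rho)$. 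Once these structural facts are available, everything else is the formal bookkeeping sketched above.
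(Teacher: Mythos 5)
This theorem is not proved in the paper at all: it is quoted verbatim from \cite[Theorem 10.4]{kac0}, and your sketch is precisely the standard generalized-Casimir proof given there, so in outline it is correct and matches the (implicit) source. One caveat: the ``Tits-cone lemma'' as you literally state it --- that $\mu\le\Lambda$ together with $(\mu+\rho,\mu+\rho)=(\Lambda+\rho,\Lambda+\rho)$ already forces $\mu+\rho\in W(\Lambda+\rho)$ --- is false in general (already for a rank-two diagram one finds lattice points of equal norm below $\Lambda+\rho$ outside the orbit); the correct order of argument is to use the $W$-anti-invariance of $R\cdot\ch L(\Lambda)$ first, so that the support is $W$-stable and each orbit either meets a reflection hyperplane (coefficient $0$) or has a dominant regular representative $\mu+\rho\le\Lambda+\rho$, and only then does the norm comparison $(\Lambda+\rho-\mu-\rho,\Lambda+\rho+\mu+\rho)=0$ force $\mu=\Lambda$. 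Since you already invoke the anti-invariance in the next step, this is a reordering rather than a missing idea.
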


Nakajima in \cite{nakajima} gives a geometrical interpretation of the irreducible representation $L(\Lambda)$, and in turn to the LHS of 
\eqref{weyl-kac} using his quiver varieties.  In the special case of $x=0$
{\cite[Theorem 10.2]{nakajima}} implies: 
\begin{theorem}[Nakajima] Fix $\w\in \N^{\calV}$ then
 \beq \label{nakajima}  \sum_{\vv\in \N^{\calV}} \dim\left(H^{2d_{\vv,\w}}(\calM(\vv,\w)\right) X^{\vv}= {\sum_{\alpha\in \N^{\calV}}} \dim\left(L(\Lambda_\w)_{\Lambda_\w-\alpha}\right) X^{\alpha}
 \eeq
\end{theorem}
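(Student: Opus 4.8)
The plan is to extract the statement from Nakajima's \cite[Theorem 10.2]{nakajima} by specializing his parameter. Nakajima's theorem gives an isomorphism between $L(\Lambda_\w)$ and a direct sum of certain (co)homology groups of his quiver varieties $\calM_\zeta(\vv,\w)$, where the weight space $L(\Lambda_\w)_{\Lambda_\w-\alpha}$ is matched with the top-degree homology (equivalently, top-degree compactly supported cohomology) of the component indexed by $\vv=\alpha$. The crossing map from the chamber where $\zeta$ is generic to $\zeta=(0,1)$-type stability (so that $\calM_\zeta(\vv,\w)$ becomes the variety $\calM(\vv,\w)=\calM_{1,0}(\vv,\w)$ of this paper) is what one uses; Nakajima's construction of the $\g(\Gamma)$-action is designed so that the highest-weight vector lives in $H^0$ of the point stratum and the whole representation is generated from it by correspondences, with the weight grading given by $\vv$. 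So the first step is simply to identify our $\calM(\vv,\w)$ with Nakajima's $\calM_\zeta(\vv,\w)$ for the appropriate stability parameter, which is immediate from the GIT description: taking $l=1$, $\lambda=0$ is exactly Nakajima's stability condition, and our Lemma on nonsingularity plus Corollary~\ref{simple} guarantee the variety is smooth of the expected dimension $2d_{\vv,\w}$.

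Next I would translate between homology and cohomology. Since $\calM(\vv,\w)$ is smooth of real dimension $4d_{\vv,\w}$, Poincar\'e duality gives $H^{2d_{\vv,\w}}(\calM(\vv,\w))\cong H_{2d_{\vv,\w}}^{\mathrm{BM}}(\calM(\vv,\w))$, the Borel–Moore homology in the middle degree, which is the group in terms of which Nakajima phrases his result (his $H_{\mathrm{top}}$ of the Lagrangian, or equivalently $H_{2d}^{\mathrm{BM}}$ of the total space — these agree because the variety retracts onto the core). So $\dim H^{2d_{\vv,\w}}(\calM(\vv,\w))$ equals the dimension of the degree-$\vv$ piece of Nakajima's module, which by \cite[Theorem 10.2]{nakajima} is $\dim L(\Lambda_\w)_{\Lambda_\w-\vv}$. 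Summing over $\vv\in\N^\calV$ against $X^\vv$ then gives exactly \eqref{nakajima}, once one checks the normalization of weights: Nakajima's highest weight corresponds to $\vv=\0$ (where $\calM(\0,\w)$ is a point) contributing the constant term $1=\dim L(\Lambda_\w)_{\Lambda_\w}$, and increasing $\vv_i$ by one lowers the weight by $\alpha_i$, matching the $\Lambda_\w-\alpha$ indexing on the right-hand side.

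The only real subtlety — and the step I would flag as the main obstacle — is making sure the version of Nakajima's theorem being invoked matches the geometric setup here on the nose: Nakajima works with several stability parameters and several cohomology theories (equivariant vs. ordinary, total space vs. central fiber), and one must verify that his $x=0$ specialization of \cite[Theorem 10.2]{nakajima} is literally the statement that $\bigoplus_\vv H_{\mathrm{top}}(\calM(\vv,\w))$ carries the irreducible $L(\Lambda_\w)$ with the weight grading by $\vv$, \emph{for the same variety} $\calM(\vv,\w)$ that appears in Theorem~\ref{maint}. Once that identification is pinned down, the rest is Poincar\'e duality and bookkeeping of the generating function. In practice this is exactly what Nakajima proves, so the proof is essentially a citation together with the duality remark; I would state it as such rather than reproving Nakajima's construction of the $\g(\Gamma)$-module structure.
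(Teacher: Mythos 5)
Your proposal is correct and matches the paper's treatment: the paper gives no independent proof of this statement, but simply records it as the $x=0$ specialization of Nakajima's \cite[Theorem 10.2]{nakajima}, exactly the citation-plus-identification route you describe. The extra care you take over the stability parameter, the Borel--Moore versus ordinary cohomology bookkeeping, and the weight normalization is a reasonable elaboration of what the paper leaves implicit, and introduces no new content or gap.
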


We notice that the denominator of the RHS of \eqref{hausel} agrees
with the LHS of Hua's \eqref{hua}. On the other hand by Theorem~\ref{maint} and Nakajima's \eqref{nakajima} we have   \beq \label{chain} (\mbox{LHS of \eqref{hausel}})_{q=0}= \mbox{ LHS of \eqref{nakajima} } = \mbox{LHS of \eqref{weyl-kac}},  \eeq for every $\Lambda_\w=\Lambda$. For $m\in \N$ take in particular $\w=m{\bf 1}$ then $\Lambda_{m{\bf 1}}=m\rho$. Then by applying the Kac denominator formula \eqref{kac} for both the numerator and the denominator we get $$(\mbox{RHS of \eqref{weyl-kac}})_{\Lambda=m\rho}= \frac{{\displaystyle \sum_{w\in W}} \det(w) X^{(m+1)\rho-w((m+1)\rho)}}{{\displaystyle \sum_{w\in W} }\det(w) X^{\rho-w(\rho)}}=\frac{\prod_{\alpha \in \N^\V} (1-X^{(m+1)\alpha})^{m_\alpha}}{\prod_{\alpha \in \N^\V} (1-X^\alpha)^{m_\alpha}}=\prod_{\alpha \in \N^\V} (1+X^{\alpha}+\dots+X^{m\alpha})^{m_\alpha}.$$ Thus when we  take the limit $m\rightarrow \infty$ we get 
\beq \label{limitweyl-kac}\lim_{m\rightarrow \infty}\left( (\mbox{RHS of \eqref{weyl-kac}})_{\Lambda=m\rho} \right) = \frac{1}{\prod_{\alpha \in \N^\V} (1-X^\alpha)^{m_\alpha}}. \eeq  Because for a partition $\lambda^i$ of $n>0$ we have $\langle \lambda_i,1^m\rangle\geq m$ and so in the $m\rightarrow \infty$ limit the numerator of the RHS of \eqref{hausel} tends to $1$ therefore
\beq \label{limithausel} \lim_{m\rightarrow \infty}\left( (\mbox{RHS of \eqref{hausel}})_{\w=m\1 } \right) = \frac{1}{ \displaystyle{\sum_{\lambda  \in \calP^\calV}}   \frac{\prod_{e\in \calE} q^{\langle \lambda^{s(e)},\lambda^{t(e)}\rangle}}{\prod_{i\in \calV} \left(q^{\langle\lambda^i,\lambda^i\rangle}\prod_k \prod_{j=1}^{m_k(\lambda^i)} (1-q^{-j})\right) }X^{|\lambda|}}. \eeq
Now the combination of \eqref{limithausel}, \eqref{hua}, \eqref{chain}, \eqref{weyl-kac}, \eqref{limitweyl-kac} yields \eqref{agrees}, which, as explained there, implies Conjecture~\ref{kacconj}.$\square$

\end{document}